\newtheorem{theorem}{Theorem}[section]
\newtheorem{lemma}[theorem]{Lemma}
\newtheorem{proposition}[theorem]{Proposition}
\theoremstyle{definition}
\newtheorem{remark}[theorem]{Remark}
\numberwithin{equation}{section}
\begin{document}

\title[The geometric mean error for self-affine measures]{Asymptotic order of the geometric mean error for some self-affine measures}
\author{Sanguo Zhu}
\address{School of Mathematics and Physics, Jiangsu University
of Technology\\ Changzhou 213001, China.}
\email{sgzhu@jsut.edu.cn}

\author{Shu Zou}

\subjclass[2000]{Primary 28A75, 28A80; Secondary 94A15}
\keywords{geometric mean error, self-affine measure, Bedford-McMullen carpets}

\begin{abstract}
Let $E$ be a Bedford-McMullen carpet associated with a set of affine mappings $\{f_{ij}\}_{(i,j)\in G}$ and let $\mu$ be the self-affine measure associated with $\{f_{ij}\}_{(i,j)\in G}$ and a probability vector $(p_{ij})_{(i,j)\in G}$. We study the asymptotics of the geometric mean error in the quantization for $\mu$. Let $s_0$ be the Hausdorff dimension for $\mu$. Assuming a separation condition for $\{f_{ij}\}_{(i,j)\in G}$, we prove that the $n$th geometric error for $\mu$ is of the same order as $n^{-1/s_0}$.
\end{abstract}

\maketitle

\section{Introduction}

In this paper, we study the asymptotics of the geometric mean error in the quantization for the self-affine measures on Bedford-McMullen carpets. The quantization problem for probability measures has a deep background in information theory and engineering technology (cf. \cite{GN:98,Za:63}). One of the main objectives of this problem is to study the errors when approximating a given probability measure with discrete probability measures that are supported on finite sets. The quantization problem with respect to the geometric mean error is a limiting case of that in $L_r$-metrics as $r$ decreases to zero; it is usually more difficult than the latter because the involved integrals are typically negative and the integrands are in logarithmic forms. Also, for the above reason, those techniques which are developed for the $L_r$-quantization are often not applicable. We refer to \cite{GL:00,GL:04} for rigorous mathematical foundations of quantization. One can see \cite{GL:01,GL:04,GL:05,GL:12,KZ:16,Kr:08,LM:02,MR:15,PK:01}) for related results.
\subsection{Quantization error and quantization coefficient}

Let $\nu$  be a Borel probability measure on $\mathbb{R}^q$.
 For every $k\in\mathbb{N}$, we write
 \[
 \mathcal{D}_k:=\{\alpha\subset\mathbb{R}^q:1\leq{\rm card}(\alpha)\leq k\}.
 \]
 The $k$th quantization error for $\nu$ of order $r\in[0,\infty)$ is defined by
\begin{eqnarray}\label{quanerror}
e_{k,r}(\nu)=\left\{\begin{array}{ll}\big(\inf\limits_{\alpha\in\mathcal{D}_k}\int d(x,\alpha)^{r}d\nu(x)\big)^{1/r}&r>0\\
\inf\limits_{\alpha\in\mathcal{D}_k}\exp\big(\int\log d(x,\alpha)d\nu(x)\big)&r=0\end{array}\right..
\end{eqnarray}

By \cite{GL:04}, we have, $e_{k,r}(\nu)\to e_{k,0}(\nu)$ as $r\to 0$, provided that $\int |x|^sd\nu(x)<\infty$ for some $s>0$. Thus, the quantization for $\nu$ of order zero can be regarded as a limiting case of that of order $r>0$. We also call $e_{k,0}(\nu)$ the $k$th \emph{geometric mean error} for $\nu$.

A set $\alpha\in\mathcal{D}_k$ is called a $k$-optimal set for $\nu$ of order $r$ if  the infimum in (\ref{quanerror}) is attained at  $\alpha$. Let $C_{k,r}(\nu)$ denote the collection of all such sets $\alpha$. By \cite[Theorem 4.12]{GL:00}, for every $r>0$, $C_{k,r}(\nu)$ is non-empty whenever the $r$th moment for $\nu$ is finite:
\[
\int |x|^rd\nu(x)<\infty.
\]

In the following, we simply write $e_k(\nu)$ for $e_{k,0}(\nu)$ and write $C_k(\nu)$
for $C_{k,0}(\nu)$. By Theorem 2.5 of \cite{GL:04}, we have $e_k(\nu)>e_{k+1}(\nu)$ and $C_k(\nu)\neq\emptyset$ if the following condition is satisfied:
\begin{equation}\label{optiexists}
\int_0^1\sup_{x\in\mathbb{R}^q}\nu(B(x,s))\frac{1}{s}ds<\infty,
\end{equation}
where $B(x,s)$ denotes the closed ball of radius $s$ which is centered at $x$. This condition is fulfilled if there exist some constants $C,t$, such that
\[
\sup_{x\in\mathbb{R}^q}\nu(B(x,\epsilon))\leq C\epsilon^t\;{\rm for\;all}\;\;\epsilon>0.
\]

The upper and lower quantization coefficients are natural characterizations for the asymptotic properties of the quantization errors. Recall that for $s>0$, the $s$-dimensional upper and lower quantization coefficient for $\nu$ of order $r$ are defined by
\begin{equation}\label{coefficient}
\overline{Q}_r^s(\nu):=\limsup_{k\to\infty}k^{\frac{1}{s}}e_{k,r}(\nu),\;\underline{Q}_r^s(\nu):=\liminf_{k\to\infty}k^{\frac{1}{s}}e_{k,r}(\nu).
\end{equation}
The upper (lower) quantization dimension $\overline{D}_r(\nu) (\underline{D}_r(\nu))$ for $\nu$ of order $r$ is exactly the critical point at which $\overline{Q}_r^s(\nu) (\underline{Q}_r^s(\nu))$ "jumps" from infinity to zero:
\[
\overline{D}_r(\nu):=\limsup_{k\to\infty}\frac{\log k}{-\log e_{k,r}(\nu)},\;\underline{D}_r(\nu):=\liminf_{k\to\infty}\frac{\log k}{-\log e_{k,r}(\nu)}.
\]
One may see \cite{GL:00,GL:04,PK:01} for more details.
In comparison with the upper and lower quantization dimension, the upper and lower quantization coefficient provide us with more accurate information for the asymptotics of the quantization error. In \cite{GL:04}, Graf and Luschgy established general results on the asymptotics of the geometric mean errors for absolutely continuous distributions and self-similar measures on $\mathbb{R}^q$.
\subsection{Bedford-McMullen carpets and self-affine measures}
Let $m,n$ be integers with $n\geq m\geq2$. Let $G$ be a subset of
\begin{equation}\label{data}
\Gamma:=\{0,1,\ldots,n-1\}\times\{0,1,\ldots,m-1\}.
\end{equation}
We assume that ${\rm card}(G)\geq 2$. We consider the following affine mappings:
\[
f_{ij}:(x,y)\mapsto\bigg(\frac{x+i}{n},\frac{y+j}{m}\bigg),\;(x,y)\in\mathbb{R}^2;\;(i,j)\in G.
\]
From  \cite{Hut:81}, there exists a unique non-empty compact set $E\subset\mathbb{R}^2$ satisfying
\[
E=\bigcup_{(i,j)\in G}f_{ij}(E).
\]
The set $E$ is referred to as the self-affine set associated with $\{f_{ij}\}_{(i,j)\in G}$. We also call $E$ a Bedford-McMullen carpet.
Given a probability vector $(p_{ij})_{(i,j)\in G}$, there exists a unique Borel probability measure $\mu$ supported on $E$ such that
\begin{equation}\label{self-affinemeasure}
\mu=\sum_{(i,j)\in G}p_{ij}\mu\circ f_{ij}^{-1}.
\end{equation}
The measure $\mu$ is called the self-affine measure associated with $\{f_{ij}\}_{(i,j)\in G}$ and $(p_{ij})_{(i,j)\in G}$. Self-affine sets and self-affine measures have attracted great attention of mathematicians (cf. \cite{Bed:84,Fal:10,J:11,King:95,Mcmullen:84,Peres:94b}) in the past decades, and related problems are often rather difficult.

Let $G_y:={\rm Proj}_yG$ and $\vartheta:=\frac{\log m}{\log n}$. We define
\begin{eqnarray}
&G_x(j):=\{i: (i,j)\in G\},\;\;q_j:=\sum_{i\in G_x(j)}p_{ij},\;j\in G_y;\nonumber\\
&s_0:=-\frac{1}{\log m}\bigg(\vartheta\sum\limits_{(i,j)\in G}p_{ij}\log p_{ij}+(1-\vartheta)\sum\limits_{j\in G_y}q_j\log q_j\bigg).\label{s0}
\end{eqnarray}
By \cite{J:11,King:95}, the Hausdorff dimension for $\mu$ is equal to $s_0$. More exactly, we have
\[
\lim_{\epsilon\to 0}\frac{\log\mu(B(x,\epsilon))}{\log\epsilon}=s_0\;\;{\rm for}\;\;\mu-a.e.\;x.
\]
This, along with \cite[Corollary 2.1]{zhu:12}, implies that $\underline{D}(\mu)=\overline{D}(\mu)=s_0$. Unfortunately, this does not provide us with accurate information for the asymptotics of the geometric mean error. In order to obtain the exact asymptotic order of the geometric mean error for $\mu$, we need to examine the finiteness and positivity of the upper and lower quantization coefficient for $\mu$ of order zero. As the main result of the present paper, we will prove

\begin{theorem}\label{mthm}
Let $m\geq n\geq 3$ be fixed integers and let $\Gamma$ be as given in (\ref{data}). Let $G$ be a subset of $\Gamma$ with ${\rm card}(G)\geq 2$. Assume that
\begin{equation}\label{ssc}
\max\{|i_1-i_2|,|j_1-j_2|\}\geq 2
 \end{equation}
 for every pair of distinct words $(i_1,j_1),(i_2,j_2)\in G$. Then for the self-affine measure $\mu$ as defined in (\ref{self-affinemeasure}), we have $0<\underline{Q}_0^{s_0}(\mu)\leq\overline{Q}_0^{s_0}(\mu)<\infty$.
\end{theorem}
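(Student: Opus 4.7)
The plan is to reduce the geometric mean error to an entropy sum over the natural partition of $E$ by approximate squares and to leverage the separation condition (\ref{ssc}) for both a geometric gap and a Frostman-type ball estimate.

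For each integer $k\ge 1$ let $L_k=\lceil k\vartheta\rceil$ and let $\mathcal{Q}_k$ denote the collection of \emph{approximate squares of level $k$}: each $Q\in\mathcal{Q}_k$ is obtained by fixing a $y$-word $(j_1,\dots,j_k)\in G_y^{k}$ and an $x$-word $(i_1,\dots,i_{L_k})$ that is admissible in the first $k$ positions (i.e.\ $(i_\ell,j_\ell)\in G$), then taking the union of all cylinders $f_{i_1j_1}\circ\cdots\circ f_{i_{L_k}j_{L_k}}(E)$ consistent with these data. Such a $Q$ has Euclidean diameter $\asymp m^{-k}$, and its mass factors as a product of the $p_{i_\ell j_\ell}$ for $\ell\le k$ and appropriate marginals of $p$ for the remaining $k<\ell\le L_k$ positions. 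A Shannon--McMillan-type calculation on the Bernoulli shift over $G^{\mathbb N}$ then yields $-\log\mu(Q)\sim k s_0\log m$ for $\mu$-typical $Q$, so $\asymp m^{ks_0}$ squares suffice to carry all but an $\varepsilon$-fraction of $\mu$. For the upper bound $\overline{Q}_0^{s_0}(\mu)<\infty$, I would place one representative point inside each $Q\in\mathcal{Q}_k$ to obtain a quantizer $\alpha_k$ of size $N_k\asymp m^{ks_0}$. Since every $x\in E$ lies in some such $Q$, $d(x,\alpha_k)\le\mathrm{diam}(Q)\le Cm^{-k}$, whence $\int\log d(x,\alpha_k)\,d\mu\le\log C-k\log m$; because $N_{k+1}/N_k$ is bounded and $n\mapsto e_n(\mu)$ is non-increasing, this upgrades to $e_n(\mu)\le C'n^{-1/s_0}$ for every $n$.

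For $\underline{Q}_0^{s_0}(\mu)>0$, given any $\alpha\subset\mathbb{R}^2$ with $|\alpha|=n$, I would choose $k^{*}$ so that $m^{k^{*}s_0}\asymp n$. The hypothesis (\ref{ssc}) forces a gap of order one between any two distinct first-generation pieces $f_{ij}(E)$, and this propagates after $L_{k^{*}}$ iterations to a gap of order $m^{-k^{*}}$ between distinct approximate squares in $\mathcal{Q}_{k^{*}}$. Hence each $a\in\alpha$ can meet at most $O(1)$ squares in $\mathcal{Q}_{k^{*}}$, and for $x$ in any untouched $Q$ one has $d(x,\alpha)\ge cm^{-k^{*}}$. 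Combining this with the layer-cake inequality
\[
-\int\log d(x,\alpha)\,d\mu\;\le\;\int_{0}^{1}\mu\bigl(\{x:d(x,\alpha)<\rho\}\bigr)\,\frac{d\rho}{\rho}
\]
and a uniform Frostman-type estimate $\mu(B(x,\rho))\le C\rho^{s_0}$ gives $-\int\log d(x,\alpha)\,d\mu\le C+s_0^{-1}\log n$, i.e.\ $e_n(\mu)\ge cn^{-1/s_0}$.

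The main obstacle is establishing the uniform ball estimate $\mu(B(x,\rho))\le C\rho^{s_0}$. In the self-similar setting of \cite{GL:04} this is essentially Moran's formula under the OSC; for a Bedford--McMullen carpet, however, Euclidean balls and approximate squares have different aspect ratios, so one must cover $B(x,\rho)$ by boundedly many approximate squares of an appropriate level and then exploit the two-scale product form of $\mu(Q)$ together with (\ref{ssc}) to control the worst-case mass uniformly in $x$. The logarithmic integrand in $e_{n,0}(\mu)$ precludes the moment-type substitutes that suffice when $r>0$, which is precisely why the geometric mean error requires this extra regularity input not already present in the $L_r$-quantization theory for these carpets.
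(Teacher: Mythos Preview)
Your proposal has genuine gaps in both directions, and the missing ingredient is not a technicality but the heart of the matter.

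\textbf{Lower bound.} The uniform Frostman estimate $\mu(B(x,\rho))\le C\rho^{s_0}$ that you flag as the main obstacle is in fact \emph{false} in general for self-affine measures on Bedford--McMullen carpets. An approximate square of level $k$ can carry mass as large as $\overline{p}^{\,\ell(k)}\overline{q}^{\,k-\ell(k)}$; since $\log\overline{p}\ge\sum_{(i,j)\in G}p_{ij}\log p_{ij}$ and $\log\overline{q}\ge\sum_{j\in G_y}q_j\log q_j$, this quantity is at least $m^{-ks_0}$, with strict inequality unless all the $p_{ij}$ (and all the $q_j$) coincide. Your layer-cake argument therefore yields at best $e_n(\mu)\ge cn^{-1/t}$ for some exponent $t<s_0$, which says nothing about $\underline{Q}_0^{s_0}(\mu)$. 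The paper sidesteps this entirely: it uses a Frostman bound only with the crude exponent $t=-\log\overline{q}/\log m$ (Lemma~\ref{prez1}), and only to ensure that the conditional measures $\nu_\sigma$ satisfy $\hat{e}_h(\nu_\sigma)\ge B_h$ uniformly in~$\sigma$; it never attempts a pointwise mass bound at the sharp exponent~$s_0$.

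\textbf{Upper bound.} A Shannon--McMillan argument gives only $-\log\mu(Q)=ks_0\log m+o(k)$ for $\mu$-typical $Q$, with typical-set cardinality $m^{k(s_0\pm\varepsilon)}$. Placing one point in each typical square and bounding the atypical mass by $\varepsilon$ leaves an error of order $\varepsilon k$ in $s_0^{-1}\log N_k+\hat{e}_{N_k}(\mu)$, which diverges as $k\to\infty$; placing a point in \emph{every} $Q\in\mathcal{Q}_k$ gives $N_k={\rm card}(G)^{\ell(k)}{\rm card}(G_y)^{k-\ell(k)}$, which is not $\asymp m^{ks_0}$ except in degenerate cases. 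Either way you recover only $\overline{D}_0(\mu)\le s_0$, not $\overline{Q}_0^{s_0}(\mu)<\infty$.

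The paper's route is different in kind. It works with the mass-stopping family $\Lambda_k=\{\sigma:\mu_{\sigma^\flat}\ge\eta^k>\mu_\sigma\}$, so that $\log\mu_\sigma\approx-\log\phi_k$ \emph{uniformly} over $\sigma\in\Lambda_k$; it then reduces both bounds (via Proposition~\ref{pre01}) to the ratio $s_k$ of~(\ref{sk}), and proves the quantitative rate $|s_k-s_0|\lesssim k^{-1}$ by embedding $\Lambda_k$ into the product coding space $W$ and replacing the overlapping cylinders that arise there by a carefully constructed finite maximal antichain (Lemmas~\ref{z1}--\ref{lem5}). It is precisely this $O(k^{-1})$ rate---far sharper than anything Shannon--McMillan delivers---that upgrades the dimension statement to positivity and finiteness of the coefficient.
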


\begin{remark}
For $r>0$, Kessb\"{o}hmer and Zhu \cite{KZ:16} proved that $\underline{D}_r(\mu)=\underline{D}_r(\mu)$ and determined the exact value; they also proved the finiteness and positivity of the upper and lower quantization coefficient for $\mu$ of order $r$ in some special cases and Zhu \cite{zhu:18} proved this fact in general by associating subsets of $E$ with those of the product coding set $W:=G^{\mathbb{N}}\times G_y^{\mathbb{N}}$ and considering an auxiliary measure which is supported on $W$.
\end{remark}

In order to prove Theorem \ref{mthm}, we will also embed subsets of $E$ into the product coding set $W$ and consider an auxiliary product measure $\lambda$ which is supported on $W$. As is noted in \cite{zhu:18}, the images of non-overlapping rectangles (under the above-mentioned embedment) may be overlapping. This is one of the main obstacles in the way of proving the main result.

 In \cite{zhu:18}, the author removed the possible overlappings by keeping the largest of those pairwise overlapping sets and deleting smaller ones, and then estimated the possible "loss". However, in the study of the geometric mean error, the involved integrals are usually negative and the integrands are in logarithmic forms. As a consequence, the method in \cite{zhu:18} is not applicable. Our new idea here is to replace those overlappings with some other subsets of $W$ such that the sets in the final collection are pairwise disjoint. We also need to estimate the possible loss which is caused by such replacements.

The remaining part of the paper is organized as follows. In section 2, we establish an estimate for the geometric mean error for $\mu$ and reduce the asymptotics of $(e_k(\mu))_{k=1}^\infty$ to those of a number sequence $(s_k)_{k=1}^\infty$. We will need the assumption (\ref{ssc}) so that the three-step procedure as depicted in \cite{KZ:15b} can be applied. In section 3, we consider the coding space $W$ and determine the asymptotic order for two related number sequences $(d_k)_{k=1}^\infty$ and $(t_k)_{k=1}^\infty$ in terms of $s_0$. In section 4, we associate subsets of $E$ with those of $W$ and remove the possible overlappings by using the above-mentioned replacements. This enables us to establish a relationship between $(s_k)_{k=1}^\infty$ and $(t_k)_{k=1}^\infty$ and complete the proof for Theorem \ref{mthm}.
\section{Preliminaries}
We denote by $|A|$ the diameter of a set $A\subset\mathbb{R}^2$ and $A^\circ$ its interior in $\mathbb{R}^2$; let $(A)_\delta$ denote the closed $\delta$-neighborhood of $A$ for $\delta>0$. For $x\in\mathbb{R}$, let $[x]$ denote the largest integer not exceeding $x$. We will use the following notation in the remaining part of the paper (cf. \cite{GL:04}):
\begin{equation}\label{logen0}
\hat{e}_k(\nu):=\log e_k(\nu)=\inf_{\alpha\in\mathcal{D}_k}\int\log d(x,\alpha)d\nu(x),\;k\geq 1.
\end{equation}

Let $k_0:=\vartheta^{-1}$. For every $k<k_0$, we define $\Omega_k:=G_y^k$; for $k\geq k_0$ we define
\[
\ell(k):=[k\vartheta],\;\Omega_k:=G^{\ell(k)}\times G_y^{k-\ell(k)};\;|\sigma|:=k\;{\rm for}\;\sigma\in\Omega_k;\;\;\Omega^*:=\bigcup_{k=1}^\infty\Omega_k.
\]

For $\omega=((i_1,j_1),\ldots,(i_h,j_h))\in G^h$ and $(i,j)\in G$, we write
\[
\omega\ast (i,j):=((i_1,j_1),\ldots,(i_h,j_h),(i,j))\in G^{h+1}.
\]
In the same manner, we define $\rho\ast j$ for $\rho\in G_y^*$ and $j\in G_y$.

Let $\sigma=\big((i_1,j_1),\ldots,(i_{l(k)},j_{l(k)}),j_{l(k)+1},\ldots,j_k\big)\in\Omega^*$. We define
\begin{eqnarray}\label{flat}
&&\sigma^\flat:=\big((i_1,j_1),\ldots,(i_{\ell(k)},j_{\ell(k)}),j_{\ell(k)+1},\ldots,j_{k-1}\big),\;\;\;\;{\rm if}\;\ell(k)=\ell(k-1);\\
&&\sigma^\flat:=\big((i_1,j_1),\ldots,(i_{\ell(k)-1},j_{\ell(k)-1}),j_{\ell(k)},\ldots,j_{k-1}\big),\;{\rm if}\;\ell(k)=\ell(k-1)+1.\nonumber
\end{eqnarray}
We will consider the following approximate square $F_\sigma$ (cf. \cite{Bed:84,Mcmullen:84}):
\[
F_\sigma:=\bigg[\sum_{h=1}^{\ell(k)}\frac{i_h}{n^h},\sum_{h=1}^{\ell(k)}\frac{i_h}{n^h}+\frac{1}{n^{\ell(k)}}\bigg]\times
\bigg[\sum_{h=1}^k\frac{j_h}{m^h},\sum_{h=1}^k\frac{j_h}{m^h}+\frac{1}{m^k}\bigg].
\]
As is noted in \cite{KZ:16}, we have the following facts:
\begin{eqnarray}\label{diameter}
&&\sqrt{2}m^{-k}\leq|F_\sigma|\leq\sqrt{n^2+1}m^{-k};\\
&&\mu_\sigma:=\mu(F_\sigma)=\prod_{h=1}^{\ell(k)}p_{i_hj_h}\prod_{h=\ell(k)+1}^kq_{j_h}.\nonumber
\end{eqnarray}

We write $\sigma\prec\tau$ and call $\tau$ a descendant of $\sigma$, if $\sigma,\tau\in\Omega^*$ and $F_\tau\subset F_\sigma$.
We say that $\sigma,\tau\in\Omega^*$ are comparable if $\sigma\prec\tau$ or $\tau\prec\sigma$; otherwise, we call them incomparable. We define
\begin{eqnarray}
&&\underline{p}:=\min_{(i,j)\in G}p_{ij},\;\overline{p}:=\max_{(i,j)\in G}p_{ij},\;
\underline{q}:=\min_{j\in G_y}q_j,\;\overline{q}:=\max_{j\in G_y}q_j;\nonumber\\
&&\eta:=\underline{p}\;\underline{q},\;\Lambda_k:=\{\sigma\in\Omega^*:\mu_{\sigma^\flat}\geq\eta^k>\mu_\sigma\},\;k\geq k_0;\label{lambdak}\\
&&\phi_k:={\rm card}(\Lambda_k);\;\underline{\xi}(k):=\min_{\sigma\in\Lambda_k}|\sigma|,\;\overline{\xi}(k):=\max_{\sigma\in\Lambda_k}|\sigma|;\label{xik}.
\end{eqnarray}
\begin{remark}\label{rem0}
(r1) We have $E\subset\bigcup_{\sigma\in\Lambda_k}F_\sigma$ and $\eta\leq\mu_\sigma/\mu_{\sigma^\flat}\leq\overline{q}$ for every $\sigma\in\Omega^*$.
(r2) For $\sigma,\tau\in\Lambda_k$ with $\sigma\neq\tau$, we have
$\sigma\nprec\tau,\;\tau\nprec\sigma$ and  $F_\sigma^\circ\cap F_\tau^\circ=\emptyset$.
\end{remark}

 Let $(a_k)_{k=1}^\infty$ and $(b_k)_{k=1}^\infty$ be number sequences. We write $a_k\lesssim b_k$ if there exists some constant $T$ such that $a_k\leq Tb_k$ for all $k\geq 1$. If $a_k\lesssim b_k$ and $b_k\lesssim a_k$, then we write $a_k\asymp b_k$. By (\ref{lambdak}), one can easily see
\begin{eqnarray}
\underline{p}^{\underline{\xi}(k)}<\eta^k\leq\overline{q}^{\overline{\xi}(k)-1};\label{temp01}
\phi_k\eta^{k+1}\leq\sum_{\sigma\in\Lambda_k}\mu_\sigma=1<\phi_k\eta^k.
 \end{eqnarray}
 Thus, as an immediate consequence of (\ref{temp01}), we obtain
 \begin{eqnarray}
 \underline{\xi}(k),\;\overline{\xi}(k),\;\log \phi_k\asymp k;\;\;\phi_k\leq\phi_{k+1}\leq\eta^{-2}\phi_k.\label{xikcomparable}
 \end{eqnarray}
 The following lemma will allow us to focus on the sequence $(\phi_k)_{k=1}^{\infty}$.
\begin{lemma}\label{sz1}
We define $R_k^{s_0}(\mu):=s_0^{-1}\log\phi_k+\hat{e}_{\phi_k}(\mu)$ and
 \begin{eqnarray}
\underline{P}^{s_0}_0(\mu):=\liminf_{k\to\infty}R_k^{s_0}(\mu);\;
\overline{P}^{s_0}_0(\mu):=\limsup_{k\to\infty}R_k^{s_0}(\mu)\nonumber.
\end{eqnarray}
 Then $\underline{Q}_0^{s_0}(\mu)>0$ iff $\underline{P}^{s_0}_0(\mu)>-\infty$; $\overline{Q}_0^{s_0}(\mu)<+\infty$ iff $\overline{P}^{s_0}_0(\mu)<+\infty$.
\end{lemma}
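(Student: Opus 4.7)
The plan is to pass to logarithms and compare the whole sequence with the subsequence indexed by $\phi_k$. Writing $a_n := s_0^{-1}\log n + \hat{e}_n(\mu)$, the identity $\log\bigl(n^{1/s_0}e_n(\mu)\bigr) = a_n$ shows that
\[
\overline{Q}_0^{s_0}(\mu)<\infty \iff \limsup_{n\to\infty} a_n<\infty, \qquad \underline{Q}_0^{s_0}(\mu)>0 \iff \liminf_{n\to\infty} a_n>-\infty.
\]
On the other hand $R_k^{s_0}(\mu)=a_{\phi_k}$, so $\overline{P}_0^{s_0}(\mu),\underline{P}_0^{s_0}(\mu)$ govern the asymptotics of $a_n$ along the subsequence $n=\phi_k$. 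Thus the lemma is equivalent to showing that the full sequence $(a_n)$ and the sampled sequence $(a_{\phi_k})$ share the same finiteness/boundedness of limsup and liminf.

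The key ingredient is that $(\phi_k)$ is sufficiently dense: by (\ref{xikcomparable}) we have $\phi_k\leq\phi_{k+1}\leq\eta^{-2}\phi_k$, so given any integer $n\geq\phi_1$ there is a unique $k$ with $\phi_k\leq n<\phi_{k+1}$, and then
\[
0\leq \log n - \log\phi_k\leq \log\phi_{k+1}-\log\phi_k\leq 2\log\eta^{-1}.
\]
Since $\hat{e}_n(\mu)$ is non-increasing in $n$ (because $e_n(\mu)$ is, by the hypothesis (\ref{optiexists}) which holds here through the uniform control of $\mu$-measures of balls on a Bedford--McMullen carpet), we get the sandwich
\[
s_0^{-1}\log\phi_k + \hat{e}_{\phi_{k+1}}(\mu) \;\leq\; a_n \;\leq\; s_0^{-1}\log\phi_{k+1} + \hat{e}_{\phi_k}(\mu).
\]
Using the bounded ratio $\log\phi_{k+1}-\log\phi_k\leq 2\log\eta^{-1}$, this becomes
\[
R_{k+1}^{s_0}(\mu) - 2s_0^{-1}\log\eta^{-1} \;\leq\; a_n \;\leq\; R_k^{s_0}(\mu) + 2s_0^{-1}\log\eta^{-1}.
\]

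Taking $\limsup$ (resp. $\liminf$) as $n\to\infty$, in which case also $k\to\infty$, the additive constant $2s_0^{-1}\log\eta^{-1}$ is absorbed, and we obtain
\[
\overline{P}_0^{s_0}(\mu)-C \;\leq\; \limsup_{n\to\infty} a_n \;\leq\; \overline{P}_0^{s_0}(\mu)+C, \qquad \underline{P}_0^{s_0}(\mu)-C \;\leq\; \liminf_{n\to\infty} a_n \;\leq\; \underline{P}_0^{s_0}(\mu)+C
\]
with $C:=2s_0^{-1}\log\eta^{-1}$. These four inequalities directly give the two equivalences claimed in the lemma. There is no real obstacle beyond making sure the log-gap estimate $\log\phi_{k+1}-\log\phi_k\leq 2\log\eta^{-1}$ and the monotonicity of $\hat e_n(\mu)$ are in place; both follow from material already recorded in this section.
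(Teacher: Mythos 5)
Your proposal is correct and takes essentially the same approach as the paper: both interpolate $n$ between consecutive $\phi_k$, combine the bounded ratio $\phi_{k+1}\leq\eta^{-2}\phi_k$ from (\ref{xikcomparable}) with the monotonicity of $\hat e_n(\mu)$ (Theorem 2.5 of \cite{GL:04}), and sandwich $s_0^{-1}\log n+\hat e_n(\mu)$ between $R_{k+1}^{s_0}(\mu)-s_0^{-1}\log\eta^{-2}$ and $R_k^{s_0}(\mu)+s_0^{-1}\log\eta^{-2}$. The only difference is cosmetic (you spell out the passage to $\log$ and the absorption of the additive constant a bit more explicitly).
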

\begin{proof}
For $l\geq \phi_1$, there exists an $l\in\mathbb{N}$ such that $\phi_k\leq l<\phi_{k+1}$. By (\ref{xikcomparable}) and Theorem 2.5 of \cite{GL:04}, one can easily get the following estimates:
\begin{eqnarray*}
\frac{1}{s_0}\log l+\hat{e}_l(\mu)\left\{ \begin{array}{ll}
\leq\frac{1}{s_0}\log\phi_{k+1}+\hat{e}_{\phi_k}(\mu)\leq R_k^{s_0}(\mu)+\frac{1}{s_0}\log\eta^{-2}\\
\geq\frac{1}{s_0}\log\phi_k+\hat{e}_{\phi_{k+1}}(\mu)\geq Q_{k+1}^{s_0}(\mu)-\frac{1}{s_0}\log\eta^{-2}
\end{array}\right..
\end{eqnarray*}
This, along with (\ref{coefficient}) and (\ref{logen0}), completes the proof of the lemma.
\end{proof}

For every $\sigma\in\Omega^*$, let $h_\sigma$ be an arbitrary similitude on $\mathbb{R}^2$ of similarity ratio $m^{-|\sigma|}$. Define
\[
\nu_\sigma:=\mu(\cdot|F_\sigma)\circ h_\sigma,\;\;K(\sigma):={\rm supp}(\nu_\sigma).
\]
Let $\alpha\subset\mathbb{R}^2$ be a finite set with ${\rm card}(\alpha)=l$, we have
\begin{eqnarray}\label{auxiliary}
\int_{F_\sigma}\log d(x,\alpha)d\mu(x)&=&\mu_\sigma\int \log d(h_\sigma(x),\alpha)d\nu_\sigma(x)\nonumber
\\&\geq&\mu_\sigma(\log m^{-|\sigma|}+\hat{e}_l(\nu_\sigma)).
\end{eqnarray}
\begin{lemma}\label{prez1}
There exist constants $C,t>0$ such that for all $\sigma\in\Omega^*$ and all $\epsilon>0$, the following holds:
\begin{equation}\label{z3}
\sup_{x\in\mathbb{R}^2}\nu_\sigma(B(x,\epsilon))\leq C\epsilon^t.
\end{equation}
\end{lemma}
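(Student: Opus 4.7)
The plan is to estimate $\mu(h_\sigma(B(x,\epsilon))\cap F_\sigma)$ by covering the rescaled ball with a bounded number of level-$k'$ descendant approximate squares, where $k'$ is chosen so that these squares have diameter comparable to the radius of the rescaled ball. The bound is trivial for $\epsilon\geq 1$ (since $\nu_\sigma(B(x,\epsilon))\leq 1$), so I restrict to $\epsilon\in(0,1)$. Setting $k:=|\sigma|$, $y:=h_\sigma(x)$, $r:=m^{-k}\epsilon$, the fact that $h_\sigma$ is a similitude of ratio $m^{-k}$ yields
\[
\nu_\sigma(B(x,\epsilon))\;=\;\frac{\mu(B(y,r)\cap F_\sigma)}{\mu_\sigma}.
\]

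Next, let $k'$ be the integer with $m^{-k'}\leq r<m^{-(k'-1)}$, so $k'>k$ and $k'-k\geq\log_m(1/\epsilon)$. The level-$k'$ descendants $\{F_\tau:\tau\succ\sigma,\,|\tau|=k'\}$ are rectangles of $y$-side $m^{-k'}$ and $x$-side $n^{-\ell(k')}\in[m^{-k'},nm^{-k'}]$ with pairwise disjoint interiors, and they carry the whole $\mu$-mass of $F_\sigma$ (a one-step check using both cases in the $\sigma^\flat$ construction gives $\sum_\tau\mu_\tau=\mu_\sigma$, and $\mu(F_\sigma\setminus\bigcup_\tau F_\tau)=0$ follows by induction on $k'-k$). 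Since $B(y,r)$ has extent $2r<2m\cdot m^{-k'}$ in each coordinate, an elementary count shows that at most $N:=(2m+1)^2$ such $F_\tau$ can intersect $B(y,r)$, and hence
\[
\mu(B(y,r)\cap F_\sigma)\;\leq\;N\cdot\max_{\tau\succ\sigma,\,|\tau|=k'}\mu_\tau.
\]

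The key ingredient is a geometric-decay estimate $\mu_\tau/\mu_\sigma\leq c^{k'-k}$ for a uniform $c<1$. I would prove this by induction on $k'-k$, examining a single extension step $\rho\to\rho'$ with $|\rho'|=|\rho|+1$. In the case $\ell(|\rho'|)=\ell(|\rho|)$, a $y$-singleton is appended and the one-step ratio equals $q_j\leq\overline{q}$ for some $j\in G_y$. In the case $\ell(|\rho'|)=\ell(|\rho|)+1$, the one-step ratio takes the form $p_{ij}\,q_{j'}/q_j$ for some $(i,j)\in G$ and $j'\in G_y$, and is bounded by $\overline{q}$ using the identity $p_{ij}\leq q_j$ coming from $q_j=\sum_i p_{ij}$. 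Thus every step contributes at most $\overline{q}$, and when $|G_y|\geq 2$ (so $\overline{q}<1$) one obtains $\mu_\tau/\mu_\sigma\leq\overline{q}^{k'-k}$. Combining with the covering bound,
\[
\nu_\sigma(B(x,\epsilon))\;\leq\;N\overline{q}^{k'-k}\;\leq\;N\overline{q}^{\log_m(1/\epsilon)}\;=\;N\epsilon^t,\qquad t:=-\log_m\overline{q}>0,
\]
which is the desired conclusion.

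The one residual subtlety is the degenerate case $|G_y|=1$ (where $\mu$ sits on a single horizontal line and $\overline{q}=1$, so Case-1 steps provide no decay). There one observes that each Case-2 step instead contributes a ratio at most $\overline{p}<1$ (since $\mathrm{card}(G)\geq 2$), and the number of Case-2 steps among the $k'-k$ extension steps equals $\ell(k')-\ell(k)\geq(k'-k)\vartheta-1$, so $\mu_\tau/\mu_\sigma\leq\overline{p}^{-1}\overline{p}^{\vartheta(k'-k)}$ and the conclusion follows with $t=-\vartheta\log_m\overline{p}$. The hardest technical step throughout is the Case-2 single-step analysis: the denominator $q_j$ could in principle inflate the one-step ratio, and it is precisely the bound $p_{ij}/q_j\leq 1$ that tames it and yields a uniform geometric decay.
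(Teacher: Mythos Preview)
Your proof is correct and follows essentially the same strategy as the paper's: cover the rescaled ball by a bounded number of descendant approximate squares at a level chosen so their diameters match $\epsilon$, and use that the mass ratio $\mu_\tau/\mu_\sigma$ decays geometrically in $|\tau|-|\sigma|$. The paper invokes Remark~\ref{rem0}\,(r1) for the one-step bound $\mu_\rho/\mu_{\rho^\flat}\le\overline q$, cites \cite[Lemma~9.2]{Fal:04} for the intersection count, and uses \cite[Lemma~12.3]{GL:00} to pass from $x\in K(\sigma)$ and small $\epsilon$ to arbitrary $x$ and $\epsilon$; you instead verify the one-step bound directly (your Case~2 calculation $p_{ij}q_{j'}/q_j\le\overline q$ is exactly the content of (r1) in that case), count intersections by hand, and handle all $x$ and $\epsilon$ at once. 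Your treatment also isolates the degenerate case $|G_y|=1$ (where $\overline q=1$), which the paper's proof as written does not address; your fix via the Case-2 count $\ell(k')-\ell(k)\ge\vartheta(k'-k)-1$ and the bound $\overline p<1$ is correct.
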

\begin{proof}
Let $\sigma$ be an arbitrary word in $\Omega^*$. Let $\epsilon_0=\sqrt{n^2+1}m^{-1}$ and $\epsilon\in(0,\epsilon_0)$. There exists a unique $k\geq 1$ such that
\begin{equation}\label{z2}
m^{-(k+1)}\sqrt{n^2+1}\leq\epsilon<\sqrt{n^2+1}m^{-k}.
\end{equation}
 Let $x\in K(\sigma)$. There exists a word $\tau\in\Omega_{|\sigma|+k}$ such that $\sigma\prec\tau$ and $x\in h_\sigma^{-1}(F_\tau)$. We write
\[
\mathcal{A}_\sigma:=\{\tau\in\Omega_{|\sigma|+k}:\sigma\prec\tau,h_\sigma^{-1}(F_\tau)\cap B(x,\epsilon)\neq\emptyset\}.
\]
By \cite[Lemma 9.2]{Fal:04}, one can see that ${\rm card}(\mathcal{A}_\sigma)\leq D_0:=4\pi(n^2+1)$. Using this, (\ref{z2}) and Remark \ref{rem0} (r1), we deduce
\begin{eqnarray*}
\nu_\sigma(B(x,\epsilon))\leq D_0\overline{q}^k\leq
D_0\overline{q}^{\frac{\log\sqrt{n^2+1}-\log m}{\log m}}\epsilon^{-\frac{\log\overline{q}}{\log m}}=:D\epsilon^t.
\end{eqnarray*}
Thus, by \cite[Lemma 12.3]{GL:00}, (\ref{z3}) is fulfilled for $C:=2^t\max\{D,\epsilon_0^{-t}\}$.
\end{proof}

Next, we give estimates for the number of optimal points in the pairwise disjoint neighborhoods of $F_\sigma,\sigma\in\Lambda_k$. Let $\delta:=(n^2+1)^{-1/2}$. We write
\begin{equation}\label{alphasigma}
\alpha_\sigma:=\alpha\cap(F_\sigma)_{\frac{\delta}{8}|F_\sigma|},\;\sigma\in\Lambda_k.
\end{equation}

\begin{remark}\label{rem2}
Let $A_1:=[16\delta^{-1}+5]^2,A_2:=[16\delta^{-1}+3]^2$. By estimating the volumes, for every $\sigma\in\Lambda_k$, one can see the following facts:
\begin{enumerate}
\item[(r3)] $(F_\sigma)_{\frac{\delta}{4}|F_\sigma|}$ can be covered by $A_1$ closed balls of radii $\frac{\delta}{16}|F_\sigma|$ which are centered in $(F_\sigma)_{\frac{\delta}{4}|F_\sigma|}$; let $\beta_\sigma$ be the set of the centers of such $A_1$ balls.
\item[(r4)]$F_\sigma$ can be covered by $A_2$ closed balls of radii $\frac{\delta}{16}|F_\sigma|$ which are centered in $F_\sigma$; let $\gamma_\sigma$ be the set of the centers of such $A_2$ balls.
\item[(r5)]By (\ref{ssc}), one can see that $\alpha_\sigma\cap\alpha_\rho=\emptyset$ for distinct words $\sigma,\rho$ in $\Lambda_k$.
\end{enumerate}
\end{remark}
\begin{lemma}\label{pre} Let $h\in\mathbb{N}$ and $\alpha\in C_{\phi_k}(\mu)$ be given.  Then
\begin{enumerate}
\item[(1)] for $B_h:=t^{-1}(\log h+C)$, we have $\inf_{\sigma\in\Omega^*}\hat{e}_h(\nu_\sigma)\geq B_h$;
\item[(2)]there exists an integer $D_h\geq 1$ such that $\hat{e}_{l-h}(\nu_\sigma)-\hat{e}_l(\nu_\sigma)<\eta\log 2$ for all $l\geq D_h$ and every $\sigma\in\Omega^*$.
\item[(3)]There exists a constant $L_1$ such that
${\rm card}(\alpha_\sigma)\leq L_1$ for all $\sigma\in\Lambda_k$.
\end{enumerate}
\end{lemma}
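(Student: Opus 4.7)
All three parts rest on the uniform ball estimate of Lemma \ref{prez1} together with the fact that $K(\sigma) \subset h_\sigma^{-1}(F_\sigma)$ has diameter at most $\sqrt{n^2+1}$ uniformly in $\sigma$. For (1), the plan is to apply the layer-cake identity to $\log d(x, \alpha)$ for $\alpha \in \mathcal{D}_h$:
\[
-\int \log d(x,\alpha)\, d\nu_\sigma = \int_0^\infty \nu_\sigma(\{d(x,\alpha) \leq e^{-s}\})\, ds - \int_0^\infty \nu_\sigma(\{d(x,\alpha) > e^s\})\, ds.
\]
The second integral is uniformly bounded by $\log\sqrt{n^2+1}$. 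For the first, the bound $\nu_\sigma(\{d(x,\alpha) \leq r\}) \leq \min(1, hCr^t)$ (union bound plus Lemma \ref{prez1}) gives, after optimizing the crossover at $r_* = (hC)^{-1/t}$, a closed-form integral of order $t^{-1}(\log h + \log C) + t^{-1}$, yielding $\hat{e}_h(\nu_\sigma) \geq B_h$ uniformly in $\sigma$.

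For (2), I plan to complement the lower bound from (1) with a matching uniform upper bound $\hat{e}_l(\nu_\sigma) \leq -t^{-1}\log l + \mathrm{const}$. Such an upper bound can be obtained by explicit covering of $K(\sigma)$ using the rescaled descendant approximate squares $F_\tau$, $\tau \succ \sigma$, at a generation chosen so that their number is $\asymp l$ and their diameters are $\asymp l^{-1/t}$; taking the centers of these rectangles as the quantization set produces the required estimate uniformly in $\sigma$. Combined with the strict monotonicity of $e_l(\nu_\sigma)$ (Theorem 2.5 of \cite{GL:04}) and a telescoping estimate $e_{l-h} - e_l \leq \sum_{j=0}^{h-1}(e_{l-h+j} - e_{l-h+j+1})$, the matched upper and lower bounds force $\log(e_{l-h}(\nu_\sigma)/e_l(\nu_\sigma)) \to 0$ uniformly in $\sigma$ as $l \to \infty$, so the desired integer $D_h$ exists with the claimed property.

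For (3), I argue by contradiction: suppose $N := \mathrm{card}(\alpha_\sigma) > L_1$ for some $\sigma \in \Lambda_k$ and a large constant $L_1$ to be fixed later. Form the competitor $\alpha' := (\alpha \setminus \alpha_\sigma) \cup \gamma_\sigma$, of cardinality $\phi_k - N + A_2 < \phi_k$. Since $\gamma_\sigma$ covers $F_\sigma$ to within $\delta|F_\sigma|/16$ by Remark \ref{rem2}(r4), the local integral obeys $\int_{F_\sigma} \log d(x,\alpha')\, d\mu \leq \mu_\sigma \log(\delta|F_\sigma|/16)$. The separation condition \eqref{ssc} together with Remark \ref{rem2}(r5) ensures that on each $F_\rho$ with $\rho \neq \sigma$, the distance $d(x,\alpha')$ exceeds $d(x,\alpha)$ only by a controlled amount, so the distortion change concentrates on $F_\sigma$. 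Applying part (1) in local coordinates to bound $\int_{F_\sigma} \log d(x,\alpha)\, d\mu$ from below by $\mu_\sigma(-|\sigma|\log m + B_N)$ via \eqref{auxiliary} (which is very negative for large $N$), and using part (2) iteratively to control $\hat{e}_{\phi_k - N + A_2}(\mu) - \hat{e}_{\phi_k}(\mu)$ from above, these bounds become inconsistent for $L_1$ taken sufficiently large, contradicting $\alpha \in C_{\phi_k}(\mu)$.

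The principal obstacle is in (3): translating the local estimates of (1)--(2), which concern the auxiliary measures $\nu_\sigma$, into a global competitor argument for $\mu$. Careful use of \eqref{ssc} is essential to decouple contributions from distinct approximate squares, and ensuring that the ``gain'' from replacing $\alpha_\sigma$ by the small covering $\gamma_\sigma$ dominates all ``losses'' in the other cells requires tracking the constants $A_2, \delta, t, \eta$ through parts (1)--(2). The specific threshold $\eta \log 2$ in part (2) appears calibrated precisely for this accounting.
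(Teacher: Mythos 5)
Your part (1) matches the paper, which also invokes the layer-cake/ball-estimate argument from the proof of Theorem 3.4 of \cite{GL:04}. The problems are in parts (2) and (3).

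For part (2), the matched upper/lower bound strategy is flawed. Even if one had $-t^{-1}\log l - c_1 \le \hat{e}_l(\nu_\sigma) \le -t^{-1}\log l + c_2$ uniformly (and already the exponent $t$ in the upper bound is suspect, since $t$ comes from the $L^\infty$-type ball estimate, not the entropy dimension $s_0$), the best you get by subtraction is
$\hat{e}_{l-h}(\nu_\sigma) - \hat{e}_l(\nu_\sigma) \le t^{-1}\log\frac{l}{l-h} + c_1 + c_2$.
As $l\to\infty$ the first term vanishes, but the constant $c_1+c_2$ remains, and there is no reason it should be below the prescribed threshold $\eta\log 2$. Telescoping does not help: the total increment $\hat{e}_{l-h}-\hat{e}_l$ is the sum of the one-step increments, but you would need each one-step increment to vanish, which is exactly the statement you are trying to prove. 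The paper instead appeals to Lemma 5.9 of \cite{GL:04} and the argument of Lemma 2.3 of \cite{zhu:13}, which use a genuinely different mechanism (a splitting/iteration argument exploiting the lower bound from part (1), not a matching upper bound).

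For part (3), your competitor $\alpha' = (\alpha\setminus\alpha_\sigma)\cup\gamma_\sigma$ diverges from the paper in two respects that create real gaps. First, you omit the layer $\beta_\sigma$ that covers $(F_\sigma)_{\delta|F_\sigma|/4}$; in the paper this is what ensures that removing $\alpha_\sigma$ costs nothing on the other cells (points outside $F_\sigma$ whose nearest $\alpha$-point lay in $(F_\sigma)_{\delta|F_\sigma|/8}$ now find a nearly-as-close point of $\beta_\sigma$). Without $\beta_\sigma$ you only have the vaguer ``controlled amount'' assertion. Second and more seriously, the core quantitative comparison is misdirected. The paper invokes part (2) for $\nu_\sigma$: since $L_\sigma > L_1 = D_{h_0}$, replacing $\alpha_\sigma$ (roughly $L_\sigma$ points serving $F_\sigma$) by an $(L_\sigma - h_0)$-optimal set $\widetilde{\alpha}_\sigma$ for $\nu_\sigma$ increases the local integral by at most $\mu_\sigma \cdot \eta\log 2$, while inserting $\gamma_\tau$ into the empty cell $F_\tau$ (which exists by pigeonhole since $\alpha_\sigma\cap\alpha_\rho=\emptyset$) decreases the integral there by at least $\mu_\tau\log 2$, and $\mu_\tau > \eta\mu_\sigma$ by the definition of $\Lambda_k$. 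That is precisely what $\eta\log 2$ is calibrated for. Your version instead applies part (2) to the global measure $\mu$ (a statement the lemma does not make), and your reading of the local estimate is backwards: a very negative \emph{lower bound} $\mu_\sigma(-|\sigma|\log m + B_N)$ on $\int_{F_\sigma}\log d(x,\alpha)\,d\mu$ does not mean this integral is very negative; it means it cannot be more negative than that. As written, your ``gain'' on $F_\sigma$ is actually a potential \emph{loss} that grows with $N$, and the contradiction does not close. The key structural ingredient you are missing is the pigeonhole cell $F_\tau$ with $\alpha_\tau = \emptyset$ and the transfer of a bounded block of points ($\widetilde{\alpha}_\sigma\cup\beta_\sigma\cup\gamma_\tau$) so that the competitor $\beta$ has at most $\phi_k$ points.
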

\begin{proof}
(1) This can be seen by (\ref{z3}) and the proof of Theorem 3.4 in \cite{GL:04}.

(2) This is a consequence of Lemma \ref{prez1} and \cite[Lemma 5.9]{GL:04}. One can see the proof of \cite[Lemma 2.3]{zhu:13} for the argument.

(3) For $\sigma\in\Lambda_k$, let $\beta_\sigma,\gamma_\sigma$ be as defined in Remark \ref{rem2}.
 Let $h_0:=A_1+A_2$ and $L_1:=D_{h_0}$. Suppose that $L_\sigma:={\rm card}(\alpha_\sigma)>L_1$ for some $\sigma$. Then, by Remark \ref{rem2} (r5), there exists a $\tau\in\Lambda_k$ such that $\alpha_\tau=\emptyset$. Let $\widetilde{\alpha}_\sigma\in C_{L_\sigma-h_0}(\nu_\sigma)$. We define
$\beta:=\big(\alpha\setminus(F_\sigma)_{\frac{\delta}{8}|F_\sigma|}\big)\cup\widetilde{\alpha}_\sigma\cup\beta_\sigma\cup\gamma_\tau$.
Then using (2) and (\ref{auxiliary}), one can easily deduce
$\int\log d(x,\beta)d\mu(x)<\int\log d(x,\alpha)d\mu(x)$.
This contradicts the optimality of $\alpha$.
\end{proof}

For two number sequences $(a_k)_{k=1}^\infty$ and $(b_k)_{k=1}^\infty$, we write $a_k\approx b_k$ if there exists a constant $T$ such that $|a_k-b_k|\leq T$ for all (large) $k\in\mathbb{N}$. Now we are able to obtain the following estimate for $\hat{e}_{\phi_k}(\mu)$:
\begin{proposition}\label{pre01}
We have $\hat{e}_{\phi_k}(\mu)\approx \sum_{\sigma\in\Lambda_k}\mu_\sigma\log m^{-|\sigma|}$.
\end{proposition}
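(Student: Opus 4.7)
My plan is to establish matching inequalities $\hat{e}_{\phi_k}(\mu) \leq \sum_{\sigma \in \Lambda_k} \mu_\sigma \log m^{-|\sigma|} + O(1)$ and $\hat{e}_{\phi_k}(\mu) \geq \sum_{\sigma \in \Lambda_k} \mu_\sigma \log m^{-|\sigma|} - O(1)$. The geometric engine is that, by Remark~\ref{rem0} and (\ref{temp01}), the family $\{F_\sigma\}_{\sigma \in \Lambda_k}$ is an essentially disjoint cover of $E$ of cardinality $\phi_k$ with $\sum_\sigma \mu_\sigma = 1$, so these two bounds give exactly what is claimed.

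For the upper bound, pick any $x_\sigma \in F_\sigma$ for each $\sigma \in \Lambda_k$ and set $\alpha := \{x_\sigma : \sigma \in \Lambda_k\} \in \mathcal{D}_{\phi_k}$. For $x \in F_\sigma$ one has $d(x,\alpha) \leq |F_\sigma| \leq \sqrt{n^2+1}\, m^{-|\sigma|}$ by (\ref{diameter}). Taking logarithms, integrating on $F_\sigma$, and summing over $\sigma$ gives the upper inequality with additive constant $\log\sqrt{n^2+1}$.

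For the lower bound, fix an optimal $\alpha \in C_{\phi_k}(\mu)$. The decisive input is Lemma~\ref{pre}(3), which caps $\mathrm{card}(\alpha_\sigma) \leq L_1$ uniformly in $\sigma$. For $x \in F_\sigma$ and $a \in \alpha \setminus \alpha_\sigma$, $d(x,a) > \frac{\delta}{8}|F_\sigma|$. Split $F_\sigma$ into $B_\sigma := \{x \in F_\sigma : d(x,\alpha_\sigma) > \frac{\delta}{8}|F_\sigma|\}$ and its complement: on $F_\sigma \setminus B_\sigma$ the closest point of $\alpha$ to $x$ must lie in $\alpha_\sigma$, giving $d(x,\alpha) = d(x,\alpha_\sigma)$; on $B_\sigma$ one has $d(x,\alpha) > \frac{\delta}{8}|F_\sigma|$, while $d(x,\alpha_\sigma) \leq (1+\frac{\delta}{8})|F_\sigma|$ provided $\alpha_\sigma \neq \emptyset$. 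Comparing the two integrands on $B_\sigma$ yields
\[
\int_{F_\sigma} \log d(x,\alpha)\, d\mu \geq \int_{F_\sigma} \log d(x,\alpha_\sigma)\, d\mu - C_1 \mu_\sigma,
\]
with $C_1 := \log\frac{8(1+\delta/8)}{\delta}$ independent of $\sigma$ and $k$. The degenerate case $\alpha_\sigma = \emptyset$ is handled directly, since then $d(x,\alpha) > \frac{\delta}{8}|F_\sigma| \geq \frac{\delta\sqrt 2}{8} m^{-|\sigma|}$ for every $x \in F_\sigma$. Applying (\ref{auxiliary}) to $\alpha_\sigma$ in place of $\alpha$ and invoking Lemma~\ref{pre}(1) gives $\int_{F_\sigma} \log d(x,\alpha_\sigma)\, d\mu \geq \mu_\sigma(\log m^{-|\sigma|} + B_{L_1})$; summing over $\Lambda_k$ and using $\sum_\sigma \mu_\sigma = 1$ completes the lower bound.

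The main obstacle is that a naive application of (\ref{auxiliary}) to the full optimal set $\alpha$ produces $\mu_\sigma \hat{e}_{\phi_k}(\nu_\sigma)$, which diverges to $-\infty$ as $k \to \infty$ and is therefore useless. The rescue is the separation condition (\ref{ssc}), which through Lemma~\ref{pre}(3) confines the ``effective'' quantizer points for each approximate square $F_\sigma$ to the geometric constant $L_1$, so that the bounded quantity $\hat{e}_{L_1}(\nu_\sigma) \geq B_{L_1}$ from Lemma~\ref{pre}(1) suffices; the $\frac{\delta}{8}|F_\sigma|$-buffer is precisely what guarantees the ``far'' points in $\alpha \setminus \alpha_\sigma$ cannot create arbitrarily small distances to $x \in F_\sigma$ and contribute only the fixed loss $C_1$.
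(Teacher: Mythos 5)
Your proof is correct and follows the same overall strategy as the paper: an upper bound by placing one point per approximate square $F_\sigma$, and a lower bound by using Lemma~\ref{pre}(3) to cap ${\rm card}(\alpha_\sigma)\leq L_1$ and then feeding the bounded-cardinality set through~(\ref{auxiliary}) together with the uniform constant $B_h$ from Lemma~\ref{pre}(1). The only place you diverge is in how the lower bound is executed. The paper augments $\alpha_\sigma$ by the fixed set $\gamma_\sigma$ of $A_2$ ball-centers covering $F_\sigma$ from Remark~\ref{rem2}(r4); since $d(x,\gamma_\sigma)\leq\frac{\delta}{16}|F_\sigma|<\frac{\delta}{8}|F_\sigma|<d(x,\alpha\setminus\alpha_\sigma)$ for $x\in F_\sigma$, one gets the clean pointwise inequality $d(x,\alpha)\geq d(x,\alpha_\sigma\cup\gamma_\sigma)$ with ${\rm card}(\alpha_\sigma\cup\gamma_\sigma)\leq L_2:=L_1+A_2$, and~(\ref{auxiliary}) is applied once to this augmented set (note this automatically handles $\alpha_\sigma=\emptyset$). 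You instead compare $\log d(x,\alpha)$ to $\log d(x,\alpha_\sigma)$ directly via the split into $B_\sigma$ and its complement, paying a uniform additive loss $C_1$ and handling the case $\alpha_\sigma=\emptyset$ separately. Both are valid; the $\gamma_\sigma$-augmentation is a touch slicker because it removes the case analysis. One small slip: the diameter of the $\frac{\delta}{8}|F_\sigma|$-neighborhood of $F_\sigma$ is at most $(1+\frac{\delta}{4})|F_\sigma|$, not $(1+\frac{\delta}{8})|F_\sigma|$, so your constant should be $C_1=\log\frac{8(1+\delta/4)}{\delta}$; this has no effect on the conclusion.
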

\begin{proof}
For $\sigma\in\Lambda_k$, let $b_\sigma$ be an arbitrary point of $F_\sigma$ and set $\beta:=\{b_\sigma\}_{\sigma\in\Lambda_k}$. Then we have that ${\rm card}(\beta)\leq\phi_k$ and $d(x,\beta)\leq d(x,b_\sigma)$ for all $x\in F_\sigma$. Thus,
\begin{equation}\label{s1}
\hat{e}_{\phi_k}(\mu)\leq\sum_{\sigma\in\Lambda_k}\int_{F_\sigma}\log d(x,b_\sigma)d\mu(x)\leq\sum_{\sigma\in\Lambda_k}\mu_\sigma\log|F_\sigma|.
\end{equation}
 Let $\alpha\in C_{\phi_k}(\mu)$. For $\sigma\in\Lambda_k$, let $\gamma_\sigma,\alpha_\sigma$ be as given in Remark \ref{rem2} and (\ref{alphasigma}). By Lemma \ref{pre},  ${\rm card}(\alpha_\sigma\cup\gamma_\sigma)\leq L_1+A_2=:L_2$. Further, for all $x\in F_\sigma$, we have, $d(x,\alpha)\geq d(x,\alpha_\sigma\cup\gamma_\sigma)$. This, (\ref{auxiliary}) and Lemma \ref{pre} (1), yield
\begin{eqnarray*}
\hat{e}_{\phi_k}(\mu)&\geq&\sum_{\sigma\in\Lambda_k}\int_{F_\sigma}\log d(x,\alpha_\sigma\cup\gamma_\sigma)d\mu(x)\\
&=&\sum_{\sigma\in\Lambda_k}\mu_\sigma\int \log d(x,\alpha_\sigma\cup\gamma_\sigma)d\nu_\sigma\circ h_\sigma^{-1}(x)\\
\\&\geq&\sum_{\sigma\in\Lambda_k}\mu_\sigma\log m^{-|\sigma|}+B_{L_2}.
\end{eqnarray*}
This, (\ref{diameter}) and (\ref{s1}) complete the proof of the lemma.
\end{proof}
\begin{remark}
$\hat{e}_{\phi_k}(\mu)$ is closely connected with the sequence $(s_k)_{k=1}^\infty$:
\begin{equation}\label{sk}
s_k:=\bigg(\sum_{\sigma\in\Lambda_k}\mu_\sigma\log m^{-|\sigma|}\bigg)^{-1}\sum_{\sigma\in\Lambda_k}\mu_\sigma\log\mu_\sigma.
\end{equation}
In fact, by Proposition \ref{pre01}  and (\ref{xikcomparable}), one can see that $\hat{e}_{\phi_k}(\mu)\approx s_k^{-1}\log\phi_k$. Thus, the asymptotics of $e_{\phi_k}(\mu)$ reduce to those of the number sequence $(s_k)_{k=1}^\infty$.
\end{remark}

\section{Product coding space and related number sequences}

\subsection{Product coding space}
 Let $G$ and $G_y$ be endowed with discrete topology; let $G^{\mathbb{N}}, G_y^{\mathbb{N}}$ and $W=G^{\mathbb{N}}\times G_y^{\mathbb{N}}$  be endowed with the corresponding product topology. We denote the empty word by $\theta$. Write
\[
G^*:=\bigcup_{k=1}^\infty G^k,\;\;G_y^*:=\bigcup_{k=1}^\infty G_y^k.
\]

Let $\omega=((i_1,j_1),(i_2,j_2),\dots,(i_k,j_k))\in G^k$. For $1\leq h\leq k$, we define
\[
|\omega|:=k,\;\;\omega|_h:=((i_1,j_1),(i_2,j_2),\dots,(i_h,j_h));\;p_\omega:=p_{i_1j_1}\cdots p_{i_kj_k}.
\]
For words $\omega\in G^{\mathbb{N}}$ and $\tau\in G_y^*\cup G_y^{\mathbb{N}}$, we define $|\omega|,\omega|_h,p_\omega$ and $|\tau|,\tau|_h,q_\tau$ in the same manner. In particular,
\[
|\omega|=|\tau|:=\infty,\;p_\omega=q_\tau:=0\;{\rm for}\;\;\omega\in G^{\mathbb{N}},\tau\in G_y^{\mathbb{N}};\;\;p_\theta=q_\theta:=1.
\]
We write $\omega^-:=\omega|_{|\omega|-1}$ if $\omega\in G^*\cup G_y^*$ and $|\omega|>1$; otherwise, let $\omega^-:=\theta$.

For every pair $\omega^{(1)},\omega^{(2)}\in G^*\cup G^{\mathbb{N}}$ or, $\omega^{(1)},\omega^{(2)}\in G_y^*\cup G_y^{\mathbb{N}}$, we write
\[
\omega^{(1)}\prec\omega^{(2)}\;\;{\rm if}\;\;|\omega^{1}|\leq |\omega^{2}|\;\;{\rm and}\;\;\omega^{(1)}=\omega^{(2)}|_{|\omega^{1}|}.
\]

Let $\omega=((i_1,j_1),(i_2,j_2),\dots,(i_k,j_k))\in G^*$ and $\rho=(j_1,\ldots,j_k)\in G_y^*$. We will need to consider the following subsets of $W$:
\begin{eqnarray*}
&&[\omega]:=\{\tau\in G^{\mathbb{N}}:\tau|_k=\omega\};\;\;[\rho]:=\{\tau\in G_y^{\mathbb{N}}:\tau|_k=\rho\};\\
&&[\omega\times\rho]:=[\omega]\times[\rho],\;\;|\omega\times\rho|:=|\omega|+|\rho|.
\end{eqnarray*}
By Kolmogrov consistency theorem, there exists a unique Borel probability measure $\lambda$ on $W$ such that
\begin{equation}\label{lambda}
\lambda([\omega\times\rho])=p_\omega q_\rho.
\end{equation}
\begin{remark}
Compared with the measure $\mu$, the advantage of $\lambda$ lies in the fact that it possesses a kind of independence. This will help us to estimate the geometric mean error in a convenient manner.
\end{remark}
For our purpose, we will focus on the following set:
\begin{eqnarray*}
\Phi^*:=\{\omega\times\rho:\omega\in G^*,\;\rho\in G_y^*;\;\;|\omega|=[(|\omega\times\rho|)\vartheta]\}.
\end{eqnarray*}

For two words $\sigma^{(i)}=\omega^{(i)}\times\rho^{(i)}\in \Phi^*, i=1,2$, we write $\sigma^{(1)}\prec\sigma^{(2)}$ and call $\sigma^{(2)}$ a descendant of $\sigma^{(1)}$ if $\omega^{(1)}\prec\omega^{(2)}$ and  $\rho^{(1)}\prec\rho^{(2)}$.
We say that $\sigma^{(1)},\sigma^{(2)}$ are comparable if $\sigma^{(1)}\prec\sigma^{(2)}$ or $\sigma^{(2)}\prec\sigma^{(1)}$; otherwise we call them incomparable. We write $\sigma^{(1)}=(\sigma^{(2)})^-$ if
$\sigma^{(1)}\prec\sigma^{(2)}\;\;{\rm and}\;\;\sigma^{(2)}|=|\sigma^{(1)}|+1$.

Let $\omega=((i_1,j_1),(i_2,j_2),\dots,(i_{\ell(k)},j_{\ell(k)}))\in G^*$ and $\rho=(j_{{\ell(k)}+1},\ldots,j_k)\in G_y^*$.  Then for $\sigma=\omega\times\rho$, we have
\begin{eqnarray}\label{predicessor}
\sigma^-=:\left\{ \begin{array}{ll}
\omega\times\rho^-\;\;&\mbox{if}\;\;\ell(k)=\ell(k-1)\\
\omega^-\times\rho\;\;&\mbox{if}\;\;\ell(k)=\ell(k-1)+1
\end{array}\right..
\end{eqnarray}
\begin{remark}\label{rem3}
Let $\sigma^{(i)}=\omega^{(i)}\times\rho^{(i)}\in \Phi^*, i=1,2$. Then for $[\sigma^{(i)}],i=1,2$ , we have, either they are disjoint, or, one is a subset of the other. This can be seen as follows. If either $\omega^{(i)},i=1,2$, or $\rho^{(i)},i=1,2$, are incomparable, then clearly $[\sigma^{(i)}],i=1,2$, are disjoint; if both $\omega^{(i)},i=1,2$, and $\rho^{(i)},i=1,2$, are comparable, then one of $[\sigma^{(1)}],[\sigma^{(2)}]$, is contained in the other, since, by the definition of $\Phi^*$, $|\omega^{(1)}|>|\omega^{(2)}|$ implies that $|\rho^{(1)}|\geq|\rho^{(2)}|$.
\end{remark}

A finite subset $\Upsilon$ of $\Phi^*$ is called a finite maximal antichain in $W$ if the words in $\Upsilon$ are pairwise incomparable and $W=\bigcup_{\tau\in\Upsilon}[\tau]$.

\subsection{Two related number sequences}
For the proof of the main result, we need to study the asymptotic order of two number sequences which are related to $(s_k)_{k=1}^\infty$ (see (\ref{sk})).
The first sequence is about the words in $\Phi^*$ of the same length. We define
\begin{equation}\label{uk}
\Phi_k:=\{\widetilde{\sigma}\in \Phi^*:|\widetilde{\sigma}|=k\};\;U_k:=\sum_{\widetilde{\sigma}\in \Phi_k}\lambda([\widetilde{\sigma}])\log \lambda([\widetilde{\sigma}]);\;d_k:=\frac{U_k}{-k\log m}.
\end{equation}
\begin{lemma}\label{lem001}
We have $|d_k-s_0|\lesssim k^{-1}$.
\end{lemma}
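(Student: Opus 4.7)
The plan is to exploit the product structure of the measure $\lambda$ to reduce $U_k$ to a pair of standard i.i.d.\ entropy sums, one of length $\ell(k)$ for the $G$-coordinate and one of length $k-\ell(k)$ for the $G_y$-coordinate. By the definition of $\Phi^*$, every $\widetilde{\sigma}\in\Phi_k$ has the form $\omega\times\rho$ with $|\omega|=\ell(k)$ and $|\rho|=k-\ell(k)$, so the index set for the sum in (\ref{uk}) is simply the full Cartesian product $G^{\ell(k)}\times G_y^{k-\ell(k)}$, and $\lambda([\widetilde{\sigma}])=p_\omega q_\rho$.

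Given this, I would expand
\[
U_k=\sum_{\omega\in G^{\ell(k)}}\sum_{\rho\in G_y^{k-\ell(k)}} p_\omega q_\rho\bigl(\log p_\omega+\log q_\rho\bigr),
\]
and separate the two terms using $\sum_\omega p_\omega=\sum_\rho q_\rho=1$. This reduces matters to the well-known identity $\sum_{\omega\in G^{\ell(k)}}p_\omega\log p_\omega=\ell(k)\sum_{(i,j)\in G}p_{ij}\log p_{ij}$ (and its analogue for $q$), so that
\[
U_k=\ell(k)\sum_{(i,j)\in G}p_{ij}\log p_{ij}+(k-\ell(k))\sum_{j\in G_y}q_j\log q_j.
\]

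Finally, I would compare this with the definition of $s_0$ in (\ref{s0}). Writing $-ks_0\log m=k\vartheta\sum p_{ij}\log p_{ij}+k(1-\vartheta)\sum q_j\log q_j$ and subtracting, everything collapses to
\[
U_k+ks_0\log m=(\ell(k)-k\vartheta)\Bigl(\sum_{(i,j)\in G}p_{ij}\log p_{ij}-\sum_{j\in G_y}q_j\log q_j\Bigr),
\]
since the coefficient on the $q$-entropy simplifies as $k-\ell(k)-k(1-\vartheta)=-(\ell(k)-k\vartheta)$. The bracketed constant is fixed, and $|\ell(k)-k\vartheta|<1$ by definition of the floor, so $|U_k+ks_0\log m|$ is bounded uniformly in $k$; dividing by $k\log m$ yields $|d_k-s_0|\lesssim k^{-1}$.

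There is essentially no analytical obstacle here: the whole lemma is the arithmetic consequence of the product-measure identity, and the only possible slip is to forget that $\ell(k)$ is the integer part rather than $k\vartheta$ itself, which is precisely what creates the $O(k^{-1})$ error (rather than equality) in the statement.
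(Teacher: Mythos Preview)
Your proof is correct and follows essentially the same route as the paper: both reduce $U_k$ to $\ell(k)\sum p_{ij}\log p_{ij}+(k-\ell(k))\sum q_j\log q_j$ via the product structure, and then use $|\ell(k)-k\vartheta|\leq 1$ to bound the difference from $s_0$. The only cosmetic difference is that the paper phrases the final step through the monotonicity of an auxiliary linear function $g$ (thereby also recording the sign $d_k\leq s_0$), whereas you subtract directly; the content is the same.
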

\begin{proof}
By the definition of $d_k$, we have
\begin{eqnarray}\label{sgz2}
d_k&=&\frac{1}{-k\log m}\bigg(\ell(k)\sum_{(i,j)\in G}p_{ij}\log p_{ij}+(k-\ell(k))\sum_{j\in G_y}q_j\log q_j\bigg)\nonumber\\
&=&\frac{1}{\log m}\bigg(\frac{\ell(k)}{k}\sum_{j\in G_y}\sum_{i\in G_x(j)}p_{ij}\log p_{ij}^{-1}+\frac{k-\ell(k)}{k}\sum_{j\in G_y}q_j\log q_j^{-1}\bigg).
\end{eqnarray}
Note that $p_{ij}\leq q_j$ and $q_j=\sum_{i\in G_x(j)}p_{ij}$ for all $j\in G_y$. We obtain
\begin{equation}\label{sgz3}
\sum_{j\in G_y}\sum_{i\in G_x(j)}p_{ij}\log p_{ij}^{-1}\geq\sum_{j\in G_y}q_j\log q_j^{-1}.
\end{equation}
We consider the function $g$ as defined below:
\[
 g: x\mapsto x\sum_{j\in G_y}\sum_{i\in G_x(j)}p_{ij}\log p_{ij}^{-1}+(1-x)\sum_{j\in G_y}q_j\log q_j^{-1}.
 \]
 Using (\ref{sgz3}), one can see that $g$ is increasing. Since $\ell(k)\leq k\vartheta$, we deduce that $d_k\leq s_0$. Note that $|k\vartheta-\ell(k)|\leq 1$. This, along with (\ref{sgz2}) and (\ref{sgz3}), yields
\begin{eqnarray}\label{sgz1}
0\leq s_0-d_k\leq \frac{2}{k\log m}\sum_{(i,j)\in G}p_{ij}\log p_{ij}^{-1}\lesssim k^{-1}.
\end{eqnarray}
This completes the proof of the lemma.
\end{proof}

The second sequence is related to the words in $\Phi^*$ which are typically of different length.
Let $(\Gamma_k)_{k=1}^\infty$ be a sequence of finite maximal antichains in $W$. We define
 \begin{eqnarray}\label{temp03}
 &&t_k:=t(\Gamma_k):=\frac{\sum_{\widetilde{\sigma}\in\Gamma_k}\lambda(\widetilde{\sigma})\log \lambda(\widetilde{\sigma})}{\sum_{\widetilde{\sigma}\in\Gamma_k}\lambda(\widetilde{\sigma})\log m^{-|\widetilde{\sigma}|}};\nonumber\\&&\underline{l}_k:=l(\Gamma_k):=\min_{\widetilde{\sigma}\in\Gamma_k}|\widetilde{\sigma}|,
 \;\overline{l}_k:=\max_{\widetilde{\sigma}\in\Gamma_k}|\widetilde{\sigma}|.
 \end{eqnarray}

\begin{lemma}\label{lem01}
We have $|t_k-s_0|\lesssim \underline{l}_k^{-1}$.
\end{lemma}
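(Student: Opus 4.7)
The plan is to refine the antichain $\Gamma_k$ to a common length and then exploit the product structure of $\lambda$ for a clean telescoping computation. Writing
\[
V_k:=\sum_{\widetilde{\sigma}\in\Gamma_k}\lambda([\widetilde{\sigma}])\log\lambda([\widetilde{\sigma}]),\quad L_k:=\sum_{\widetilde{\sigma}\in\Gamma_k}\lambda([\widetilde{\sigma}])|\widetilde{\sigma}|,
\]
so that $t_k=V_k/(-L_k\log m)$, the strategy is to establish
\[
-V_k\,=\,s_0(\log m)L_k+O(1)
\]
with an absolute implied constant. Since $L_k\geq\underline{l}_k$, dividing through by $L_k\log m$ then yields $|t_k-s_0|\lesssim\underline{l}_k^{-1}$ at once.

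To obtain the displayed estimate, set $N:=\overline{l}_k$. By maximality of $\Gamma_k$ together with Remark \ref{rem3}, the cylinders $\{[\widetilde{\tau}]:\widetilde{\tau}\in\Phi_N\}$ refine $\{[\widetilde{\sigma}]:\widetilde{\sigma}\in\Gamma_k\}$: every $\widetilde{\tau}\in\Phi_N$ descends from a unique $\widetilde{\sigma}\in\Gamma_k$ (here one uses that both $\ell(k)$ and $k-\ell(k)$ are nondecreasing in $k$). For such a pair, the product structure (\ref{lambda}) makes $\lambda([\widetilde{\tau}])/\lambda([\widetilde{\sigma}])$ a product of $\ell(N)-\ell(|\widetilde{\sigma}|)$ factors from $(p_{ij})$ and $(N-\ell(N))-(|\widetilde{\sigma}|-\ell(|\widetilde{\sigma}|))$ factors from $(q_j)$; setting $A:=-\sum_{(i,j)\in G}p_{ij}\log p_{ij}$ and $B:=-\sum_{j\in G_y}q_j\log q_j$, the corresponding conditional Shannon entropy therefore equals $(\ell(N)-\ell(|\widetilde{\sigma}|))A+((N-\ell(N))-(|\widetilde{\sigma}|-\ell(|\widetilde{\sigma}|)))B$. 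Telescoping $\log\lambda([\widetilde{\tau}])=\log\lambda([\widetilde{\sigma}])+\log(\lambda([\widetilde{\tau}])/\lambda([\widetilde{\sigma}]))$ and summing over $\widetilde{\tau}\in\Phi_N$, grouped by ancestor, then gives
\[
U_N=V_k-\sum_{\widetilde{\sigma}\in\Gamma_k}\lambda([\widetilde{\sigma}])\bigl[(\ell(N)-\ell(|\widetilde{\sigma}|))A+((N-\ell(N))-(|\widetilde{\sigma}|-\ell(|\widetilde{\sigma}|)))B\bigr].
\]

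Next, the identity $U_N=-\ell(N)A-(N-\ell(N))B$, implicit in the proof of Lemma \ref{lem001}, causes the $N$-dependent pieces to cancel exactly (using $\sum_{\widetilde{\sigma}\in\Gamma_k}\lambda([\widetilde{\sigma}])=1$), leaving
\[
-V_k=\sum_{\widetilde{\sigma}\in\Gamma_k}\lambda([\widetilde{\sigma}])\bigl[\ell(|\widetilde{\sigma}|)A+(|\widetilde{\sigma}|-\ell(|\widetilde{\sigma}|))B\bigr].
\]
Since $s_0\log m=\vartheta A+(1-\vartheta)B$ and $|\ell(|\widetilde{\sigma}|)-\vartheta|\widetilde{\sigma}||\leq 1$, each bracket equals $|\widetilde{\sigma}|s_0\log m+O(1)$ with constant $A-B$, and averaging over $\lambda$ delivers $-V_k=s_0(\log m)L_k+O(1)$, completing the argument. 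The step that requires the most care is the refinement identity: verifying that the length-$N$ descendants of members of $\Gamma_k$ partition $\Phi_N$ and that the conditional distribution factors into coordinate-wise $(p_{ij})$ and $(q_j)$ distributions. Once this bookkeeping is in place, the rest is a direct entropy computation, so I do not anticipate any serious obstacle.
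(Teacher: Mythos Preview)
Your argument is correct and arrives at exactly the same key identity as the paper, namely $-V_k=\sum_{\widetilde{\sigma}\in\Gamma_k}\lambda([\widetilde{\sigma}])\bigl[\ell(|\widetilde{\sigma}|)A+(|\widetilde{\sigma}|-\ell(|\widetilde{\sigma}|))B\bigr]$, which in the paper's notation reads $\sum_{\sigma\in\Gamma_k}\lambda([\sigma])\log\lambda([\sigma])=\sum_{\sigma\in\Gamma_k}\lambda([\sigma])U_{|\sigma|}$. The only organisational difference is the direction of the refinement: the paper introduces the invariant $\zeta(\sigma)=\lambda([\sigma])(\log\lambda([\sigma])-U_{|\sigma|})$, shows it is preserved under one-step refinement, and collapses $\Gamma_k$ \emph{backward} to $\Phi_{\underline{l}_k}$ (where $\sum\zeta=0$ trivially), whereas you refine $\Gamma_k$ \emph{forward} to $\Phi_{\overline{l}_k}$ and use the entropy chain rule together with the explicit formula for $U_N$. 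Your final step, bounding each bracket by $|\widetilde{\sigma}|s_0\log m+O(A-B)$, is precisely the content of Lemma~\ref{lem001} applied pointwise, which the paper invokes after first sandwiching $t_k$ between $\min_h d_h$ and $\max_h d_h$; the two endings are equivalent.
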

\begin{proof}
For every $h\geq k_0$, let $U_h$ and $d_h$ be as defined in (\ref{uk}). Then for $\sigma\in\Gamma_k$,
\begin{eqnarray*}
U_{|\sigma|}=\ell(|\sigma|)\sum_{(i,j)\in G}p_{ij}\log p_{ij}+(|\sigma|-\ell(|\sigma|))\sum_{j\in G_y}q_j\log q_j.
\end{eqnarray*}
For every integer $h\geq 0$ and $\sigma=\omega\times\rho\in\Gamma_k$, we define
\begin{eqnarray*}
\zeta(\sigma):=\lambda([\sigma])(\log\lambda([\sigma])-U_{|\sigma|});\;
\Upsilon(\sigma,h):=\{\rho\in\Phi_{|\sigma|+h}:\sigma\prec\rho\}.
\end{eqnarray*}
If $\ell(|\sigma|+1)=\ell(|\sigma|)$, then $U_{|\sigma|+1}-U_{|\sigma|}=\sum_{j\in G_y}q_j\log q_j$. We deduce
\begin{eqnarray*}
\sum_{\rho\in\Upsilon(\sigma,1)}\zeta(\rho)&=&\sum_{j\in G_y}\lambda([\omega\times(\rho\ast j)])\big(\log\lambda([\omega\times(\rho\ast j)])-U_{|\sigma|+1}\big)\\
&=&\lambda([\sigma])\big(\log\lambda([\sigma])+\sum_{j\in G_y}q_j\log q_j\big)-\lambda([\sigma])U_{|\sigma|+1}\\&=&
\lambda([\sigma])(\log\lambda([\sigma])-U_{|\sigma|})=\zeta(\sigma).
\end{eqnarray*}
If $\ell(|\sigma|+1)=\ell(|\sigma|)+1$, then $U_{|\sigma|+1}-U_{|\sigma|}=\sum_{(i,j)\in G}p_{ij}\log p_{ij}$. We have
\begin{eqnarray*}
\sum_{\rho\in\Upsilon(\sigma,1)}\zeta(\rho)&=&\sum_{(i,j)\in G}\lambda([(\omega\ast (i,j))\times\rho])(\log(\lambda([(\omega\ast (i,j))\times\rho])-U_{|\sigma|+1})\\
&=&\lambda([\sigma])\big(\log\lambda([\sigma])+\sum_{(i,j)\in G}p_{ij}\log p_{ij}\big)-\lambda([\sigma])U_{|\sigma|+1}\\&=&
\lambda([\sigma])(\log\lambda([\sigma])-U_{|\sigma|})=\zeta(\sigma).
\end{eqnarray*}
By induction, for all $h\geq 1$ and all $\sigma\in\Gamma_k$, we have, $\sum_{\rho\in\Upsilon(\sigma,h)}\zeta(\rho)=\zeta(\sigma)$. Using this fact, we deduce
\begin{eqnarray*}
\sum_{\sigma\in\Gamma_k}\zeta(\sigma)=\sum_{\sigma\in \Phi_{\underline{l}_k}}\zeta(\sigma)=0.
\end{eqnarray*}
This, along with the definition of $\zeta(\sigma)$, yields
\[
\sum_{\sigma\in\Gamma_k}\lambda([\sigma])\log\lambda([\sigma])=\sum_{\sigma\in\Gamma_k}\lambda([\sigma])U_{|\sigma|}.
\]
Thus, we obtain
\[
\min_{\underline{l}_k\leq h\leq \overline{l}_k}d_h\leq t_k=\frac{\sum_{\sigma\in\Gamma_k}\lambda([\sigma])U_{|\sigma|}}{\sum_{\sigma\in\Gamma_k}\lambda([\sigma])\log m^{-|\sigma|}}\leq\max_{\underline{l}_k\leq h\leq \overline{l}_k}d_h.
\]
Thus, the lemma follows by the preceding inequality and Lemma \ref{lem001}.
\end{proof}

\section{Proof of Theorem \ref{mthm} }

Let $\Omega^*$ be as defined in section 2. We need to associate words in $\Omega^*$ with words in $\Phi^*$. For $\sigma=((i_1,j_1),\ldots,(i_{\ell(k)},j_{\ell(k)}),j_{\ell(k)+1},\ldots,j_k)\in\Omega^*$, we define
\[
\mathcal{L}(\sigma):=((i_1,j_1),\ldots,(i_{\ell(k)},j_{\ell(k)}))\times (j_{\ell(k)+1},\ldots,j_k).
\]
Then $\Phi_k:=\{\mathcal{L}(\sigma):\sigma\in\Omega_k\}$ and $\Phi^*=\bigcup_{k\geq 1}\Phi_k$. By (\ref{lambda}), we have
\begin{eqnarray}\label{equivalent}
\lambda([\mathcal{L}(\sigma)])=\mu_\sigma,\;|\mathcal{L}(\sigma)|=|\sigma|;\;\sigma\in\Omega^*.
\end{eqnarray}
\begin{remark}
The difference between $\Omega^*$ and $\Phi^*$ lies in the fact that they have different partial orders. The partial order on $\Omega^*$ is defined according to the geometric construction of the carpet $E$, but this is not so for $\Phi^*$. As a consequence, the words in $\Omega^*$ and those in $\Phi^*$ have descendants in different ways (cf. (\ref{flat}) and (\ref{predicessor})).
\end{remark}
\begin{remark}\label{rem1}
We write $\widetilde{\Lambda}_k:=\{\mathcal{L}(\sigma):\sigma\in\Lambda_k\}$. Let $\sigma^{(i)}\in\Lambda_k, i=1,2$, be distinct words. We know that $F^\circ_{\sigma^{(1)}}\cap F^\circ_{\sigma^{(2)}}=\emptyset$. However, it may happen that the sets $[\mathcal{L}(\sigma^{(i)})]\in\widetilde{\Lambda}_k, i=1,2$, are overlapping. This can be seen as follows. It is possible that both the following words belong to $\Lambda_k$:
\begin{eqnarray*}
&&\sigma^{(1)}=((i_1,j_1),\ldots,(i_{\ell(k)},j_{\ell(k)}),j_{\ell(k)+1},\ldots,j_k);\\
&&\sigma^{(2)}=((i_1,j_1),\ldots,(i_{\ell(k)},j_{\ell(k)}),(i,j),j_{\ell(k)+1},\ldots,j_k).
\end{eqnarray*}
We clearly have that $\mathcal{L}(\sigma^{(1)})\prec \mathcal{L}(\sigma^{(2)})$ and $[\mathcal{L}(\sigma^{(2)})]\subset[\mathcal{L}(\sigma^{(1)})]$.
\end{remark}

The possible overlappings as described in Remark \ref{rem1} prevents us from further estimates of the geometric mean error; and as we mentioned in section 1, the method in \cite{zhu:18} is no longer applicable. In order to remove the possible overlappings in $\widetilde{\Lambda}_k$, we are going to replace $\widetilde{\Lambda}_k$ with some maximal finite antichain in $W$. For this purpose, we need a finite sequence of integers which will be defined as follows.
Let $\overline{\xi}(k)$ and $\underline{\xi}(k)$ be as defined in (\ref{xik}). Then we have
\[
\max_{\widetilde{\sigma}\in\widetilde{\Lambda}_k}|\widetilde{\sigma}|=\overline{\xi}(k),\;\min_{\widetilde{\sigma}\in\widetilde{\Lambda}_k}|\widetilde{\sigma}|=\underline{\xi}(k).
\]
We write
$M:=\ell(\overline{\xi}(k))-\ell(\underline{\xi}(k))+1$. Let $\xi_1(k):=\underline{\xi}(k)$. We define
\[
\xi_2(k)=\min\{\xi_1(k)<h\leq\overline{\xi}(k): \ell(h)=\ell(\xi_1(k))+1\}.
\]
Assume that $\xi_j(k)$ is defined. We then define
\[
\xi_{j+1}(k)=\min\{\xi_j(k)<h\leq\overline{\xi}(k): \ell(h)=\ell(\xi_j(k))+1\}.
\]
By induction, the sequence $\big(\xi_j(k)\big)_{j=1}^M$ is well defined.

Next, we construct a finite maximal antichain $\widetilde{\Lambda}_M(k)$ in $W$ by induction.

Let $\widetilde{\Lambda}_1(k):=\widetilde{\Lambda}_k$ and $\mathcal{F}_1(k)=\mathcal{G}_1(k):=\emptyset$. We will construct two sets $\mathcal{F}_2(k)\subset\widetilde{\Lambda}_1(k)\cap\Phi_{\xi_2(k)}$ and $\mathcal{G}_2(k)\subset\Phi_{\xi_2(k)}\setminus\widetilde{\Lambda}_1(k)$ and the define
\[
\widetilde{\Lambda}_2(k):=(\widetilde{\Lambda}_1(k)\setminus\mathcal{F}_2(k))\cup\mathcal{G}_2(k),
 \]
 so that those words in $\widetilde{\Lambda}_2(k)$ with length not exceeding $\xi_2(k)$, are pairwise incomparable. The following Lemmas \ref{z1}-\ref{lem3} are devoted to this goal.
\begin{lemma}\label{z1}
Assume that $\xi_2(k)>\xi_1(k)+1$. Then the words in the following set are pairwise incomparable:
$\Gamma_1(k):=\bigcup_{h=\xi_1(k)}^{\xi_2(k)-1}\big(\widetilde{\Lambda}_1(k)\cap\Phi_h\big)$.
\end{lemma}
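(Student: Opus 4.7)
The plan is to show that a comparability in $\Phi^\ast$ between two elements of $\Gamma_1(k)$ would force the corresponding words in $\Omega^\ast$ to be comparable as well, which is ruled out by Remark \ref{rem0}(r2).

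The first step is a direct consequence of the definition of $\xi_2(k)$: for every integer $h$ with $\xi_1(k)\leq h\leq \xi_2(k)-1$, one has $\ell(h)=\ell(\xi_1(k))=:\ell_1$, because any such $h$ with $\ell(h)=\ell_1+1$ would be a smaller competitor for $\xi_2(k)$. In particular, every $\widetilde{\sigma}=\omega\times\rho\in\Phi^\ast$ whose length lies in this range satisfies $|\omega|=\ell_1$, so in $\Gamma_1(k)$ the length of the $G$-component is constant. Under the assumption $\xi_2(k)>\xi_1(k)+1$ this range is genuinely nontrivial, and it is exactly where the partial order on $\Phi^\ast$ becomes easy to control.

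The second step is the argument by contradiction. Suppose $\widetilde{\sigma}^{(1)}\neq\widetilde{\sigma}^{(2)}$ lie in $\Gamma_1(k)$ and are comparable, say $\widetilde{\sigma}^{(1)}\prec\widetilde{\sigma}^{(2)}$, and write $\widetilde{\sigma}^{(i)}=\mathcal{L}(\sigma^{(i)})=\omega^{(i)}\times\rho^{(i)}$ with $\sigma^{(i)}\in\Lambda_k$. By the first step, $|\omega^{(1)}|=|\omega^{(2)}|=\ell_1$, so $\omega^{(1)}\prec\omega^{(2)}$ forces $\omega^{(1)}=\omega^{(2)}$. The other condition $\rho^{(1)}\prec\rho^{(2)}$ then says that the $G_y$-part of $\sigma^{(1)}$ is a prefix of that of $\sigma^{(2)}$, whence reading off the coordinates in the definition of $F_\sigma$ gives $F_{\sigma^{(2)}}\subset F_{\sigma^{(1)}}$, i.e.\ $\sigma^{(1)}\prec\sigma^{(2)}$ in $\Omega^\ast$. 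This contradicts Remark \ref{rem0}(r2), completing the proof.

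I do not expect any real obstacle here: the content is simply the observation that the orders on $\Omega^\ast$ and $\Phi^\ast$ coincide once the length of the $G$-block is fixed. The genuinely delicate case—when moving from $\xi_j(k)$ to $\xi_{j+1}(k)$ triggers a jump in $\ell$ and the two orders diverge (cf.\ Remark \ref{rem1})—is precisely what the later constructions of $\mathcal{F}_{j+1}(k)$ and $\mathcal{G}_{j+1}(k)$ must address.
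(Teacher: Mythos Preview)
Your proof is correct. The key observation—that $\ell(h)=\ell(\xi_1(k))$ for all $h$ in the range $[\xi_1(k),\xi_2(k)-1]$, so the $G$-block has constant length $\ell_1$ on $\Gamma_1(k)$—is exactly what drives both arguments, and your translation of $\Phi^\ast$-comparability into $\Omega^\ast$-comparability (hence a contradiction with Remark~\ref{rem0}(r2)) is clean and valid.

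The paper takes a slightly different route at the contradiction step. Rather than returning to the geometric order on $\Omega^\ast$, it argues via measures: for $|\widetilde{\sigma}|<|\widetilde{\rho}|$ it uses the constancy of $\ell$ to identify $\mathcal{L}^{-1}(\widetilde{\rho}^-)=\rho^\flat$, whence $\lambda([\widetilde{\rho}^-])=\mu_{\rho^\flat}\geq\eta^k>\lambda([\widetilde{\sigma}])$, which rules out $\widetilde{\sigma}\prec\widetilde{\rho}$. Your geometric argument is arguably more transparent for this particular lemma. The paper's measure-comparison argument, however, is the template reused throughout the inductive construction (see condition~(c2), Claim~1, and Lemma~\ref{lem6}), where the words in $\mathcal{G}_h(k)$ no longer come from $\widetilde{\Lambda}_1(k)$ and Remark~\ref{rem0}(r2) is unavailable; there the inequality $\lambda([\widehat{\tau}^-])\geq\eta^k>\lambda([\widehat{\rho}])$ is the only handle. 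So the paper's choice pays off in uniformity across the induction, while yours is the more direct proof of the base case.
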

\begin{proof}
Let $\widetilde{\sigma},\widetilde{\rho}\in\Gamma_1(k)$ with $\widetilde{\sigma}\neq\widetilde{\rho}, \sigma=\mathcal{L}^{-1}(\widetilde{\sigma}),\rho=\mathcal{L}^{-1}(\widetilde{\rho})$. By (\ref{lambdak}) and (\ref{lambda}),
\begin{equation}\label{s2}
\lambda([\widetilde{\sigma}])=\mu(F_\sigma)<\eta^k,\;\lambda([\widetilde{\rho}])=\mu(F_\rho)<\eta^k.
\end{equation}
If $|\widetilde{\sigma}|=|\widetilde{\rho}|$, then $\widetilde{\sigma},\widetilde{\rho}$ certainly incomparable. Next, we assume that $|\widetilde{\sigma}|<|\widetilde{\rho}|$.
Then $\xi_1(k)<|\rho|<\xi_2(k)$ and $\ell(|\rho|)=\ell(\xi_1(k))=\ell(|\rho|-1)$. Thus,  the word $\widetilde{\rho}^-$ takes the first form in (\ref{predicessor}); and $\rho^\flat$ take the form in (\ref{flat}). From this, we deduce that $\mathcal{L}^{-1}(\widetilde{\rho}^-)=\rho^\flat$. It follows by using (\ref{s2}) that
\[
\lambda([\widetilde{\rho}^-])=\mu(F_{\rho^\flat})\geq\eta^k>\lambda([\widetilde{\sigma}]).
\]
It follows that $\widetilde{\sigma}\nprec\widetilde{\rho}$. Since $|\widetilde{\sigma}|<|\widetilde{\rho}|$, we conclude that $\widetilde{\sigma},\widetilde{\rho}$ are incomparable.
\end{proof}

Next, we consider the words in $\widetilde{\Lambda}_1(k)\cap\Phi_{\xi_2(k)}$.

\begin{lemma}\label{lem2}
Let $\widetilde{\sigma}\in\widetilde{\Lambda}_1(k)\cap\Phi_{\xi_2(k)}$ with
\begin{equation}\label{s4}
\widetilde{\sigma}=((i_1,j_1),\ldots,(i_{\ell(\xi_2(k))},j_{\ell(\xi_2(k))})\times(j_{\ell(\xi_2(k))+1},\ldots,j_{\xi_2(k)}).
\end{equation}
Assume that $\widetilde{\omega}\prec\widetilde{\sigma}$ for some $\widetilde{\omega}\in\Gamma_1(k)$. Then for every $i\in G_x(j_{\ell(\xi_2(k))})$,
\begin{equation}\label{s5}
\widetilde{\sigma}(i):=((i_1,j_1),\ldots,(i,j_{\ell(\xi_2(k))})\times(j_{\ell(\xi_2(k))+1},\ldots,j_{\xi_2(k)})\in\widetilde{\Lambda}_1(k).
\end{equation}
\end{lemma}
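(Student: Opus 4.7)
The plan is to verify the two defining conditions for $\widetilde{\sigma}(i)\in\widetilde{\Lambda}_1(k)=\widetilde{\Lambda}_k$; equivalently, writing $\sigma:=\mathcal{L}^{-1}(\widetilde{\sigma})$, $\sigma(i):=\mathcal{L}^{-1}(\widetilde{\sigma}(i))$, and $\omega:=\mathcal{L}^{-1}(\widetilde{\omega})$, I will show $\mu_{\sigma(i)^\flat}\geq\eta^k>\mu_{\sigma(i)}$. By hypothesis, $\sigma,\omega\in\Lambda_k$, so $\mu_{\sigma^\flat}\geq\eta^k>\mu_\sigma$ and $\mu_\omega<\eta^k$. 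I would use the first of these to obtain the lower bound on $\mu_{\sigma(i)^\flat}$, and the third to obtain the upper bound on $\mu_{\sigma(i)}$.

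For the lower bound $\mu_{\sigma(i)^\flat}\geq\eta^k$, the key observation is that by the defining property of $\xi_2(k)$, $\ell(\xi_2(k))=\ell(\xi_2(k)-1)+1$, so both $\sigma^\flat$ and $\sigma(i)^\flat$ are computed via the second case of (\ref{flat}). That case discards the last $G$-coordinate (either $(i_{\ell(\xi_2(k))},j_{\ell(\xi_2(k))})$ or $(i,j_{\ell(\xi_2(k))})$) in its entirety and inserts the bare $y$-coordinate $j_{\ell(\xi_2(k))}$. The choice of last $x$-index therefore plays no role in $\flat$, so $\sigma(i)^\flat=\sigma^\flat$ and I can conclude $\mu_{\sigma(i)^\flat}=\mu_{\sigma^\flat}\geq\eta^k$.

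For the upper bound $\mu_{\sigma(i)}<\eta^k$, my plan is first to verify that $\widetilde{\omega}\prec\widetilde{\sigma}(i)$, and then to compare $\mu_{\sigma(i)}$ with $\mu_\omega$ directly via the product formula in (\ref{diameter}). From $\widetilde{\omega}\in\Gamma_1(k)$ I have $|\widetilde{\omega}|\in[\xi_1(k),\xi_2(k)-1]$ and $\ell(|\widetilde{\omega}|)=\ell(\xi_1(k))=\ell(\xi_2(k))-1$, so the $G$-factor of $\widetilde{\omega}$ has length $\ell(\xi_2(k))-1$ and, by $\widetilde{\omega}\prec\widetilde{\sigma}$, must equal $((i_1,j_1),\dots,(i_{\ell(\xi_2(k))-1},j_{\ell(\xi_2(k))-1}))$. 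Since this initial segment is unchanged in passing from $\widetilde{\sigma}$ to $\widetilde{\sigma}(i)$, and the $G_y$-factor of $\widetilde{\omega}$ remains a prefix of the common $G_y$-part, the relation $\widetilde{\omega}\prec\widetilde{\sigma}(i)$ follows. Expanding $\mu_{\sigma(i)}$ and $\mu_\omega$ will then show that every factor of $\mu_\omega$ reappears in $\mu_{\sigma(i)}$, while the excess is a single $p_{i,j_{\ell(\xi_2(k))}}\leq 1$ together with $q_{j_h}\leq 1$ for $h=|\widetilde{\omega}|+2,\dots,\xi_2(k)$. Hence $\mu_{\sigma(i)}\leq\mu_\omega<\eta^k$.

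The main obstacle to watch out for is the bookkeeping across the two index conventions: in $\Omega^*$ the $G$- and $G_y$-coordinates of a word are interleaved by position, whereas in $\Phi^*$ they appear as two separate factors of independent lengths. The crucial identification is that the $G$-factor of $\widetilde{\omega}$ is exactly the first $\ell(\xi_2(k))-1$ paired coordinates of $\sigma(i)$, so that the last paired coordinate $(i,j_{\ell(\xi_2(k))})$ lies outside this common initial part; this is what produces the single $p_{i,j_{\ell(\xi_2(k))}}$ factor of slack and makes the comparison go through. Once this identification is written down, both inequalities reduce to elementary estimates.
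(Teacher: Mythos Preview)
Your proposal is correct and follows essentially the same approach as the paper. The lower-bound argument via $\sigma(i)^\flat=\sigma^\flat$ is identical; for the upper bound the paper streamlines your direct comparison by first observing $\widetilde{\sigma}(i)^-=\widetilde{\sigma}^-$ (same reason: the $^-$ operation removes the only coordinate that differs) and then using $\widetilde{\omega}\prec\widetilde{\sigma}^-$ to get $\lambda([\widetilde{\sigma}(i)])<\lambda([\widetilde{\sigma}^-])\leq\lambda([\widetilde{\omega}])<\eta^k$, which avoids your explicit index bookkeeping on the $G_y$-part but is otherwise the same idea.
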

\begin{proof}
Let $\widetilde{\sigma}\in\widetilde{\Lambda}_1(k)\cap\Phi_{\xi_2(k)}$. By (\ref{lambdak}), we have
\[
\mu_{\mathcal{L}^{-1}(\widetilde{\sigma})}<\eta^k\leq\mu_{(\mathcal{L}^{-1}(\widetilde{\sigma}))^\flat}.
\]
By the hypothesis, $\widetilde{\omega}\prec\widetilde{\sigma}$ for some $\widetilde{\omega}\in\Gamma_1(k)$. Since $\ell(|\widetilde{\sigma}|)=\ell(|\widetilde{\sigma}|-1)+1$, the word  $\widetilde{\sigma}^-$ takes the second form in (\ref{predicessor}). By Remark \ref{rem3}, we obtain $\widetilde{\omega}\prec\widetilde{\sigma}^-$. Hence, by the definition of $\Gamma_1(k)$ and (\ref{lambdak}), we obtain
\begin{equation}\label{sz2}
\mu_{\mathcal{L}^{-1}(\widetilde{\sigma}^-)}=\lambda([\widetilde{\sigma}^-])\leq\lambda([\widetilde{\omega}])=\mu_{\mathcal{L}^{-1}(\widetilde{\omega})}<\eta^k.
\end{equation}
Note that $\widetilde{\sigma}(i)^-=\widetilde{\sigma}^-$  and $(\mathcal{L}^{-1}(\widetilde{\sigma}(i)))^\flat=(\mathcal{L}^{-1}(\widetilde{\sigma}))^\flat$ for every $i\in G_x(j_{\xi_2(k)})$. From this and (\ref{sz2}), we deduce
\begin{eqnarray*}
\mu_{\mathcal{L}^{-1}(\widetilde{\sigma}(i))}<\mu_{\mathcal{L}^{-1}(\widetilde{\sigma}(i)^-)}=\mu_{\mathcal{L}^{-1}(\widetilde{\sigma}^-)}<\eta^k
\leq\mu_{(\mathcal{L}^{-1}(\widetilde{\sigma}))^\flat}=\mu_{(\mathcal{L}^{-1}(\widetilde{\sigma}(i)))^\flat}.
\end{eqnarray*}
By (\ref{lambdak}), we know that $\mathcal{L}^{-1}(\widetilde{\sigma}(i))\in\Lambda_k$ and $\widetilde{\sigma}(i)\in\widetilde{\Lambda}_1(k)$.
\end{proof}

We denote the set of all the words $\widetilde{\sigma}$ that fulfills the assumption in Lemma \ref{lem2} by $\mathcal{F}_2(k)$. For every $\widetilde{\sigma}\in\mathcal{F}_2(k)$, let us denote the set of all the words $\widetilde{\sigma}(i)$ as given in (\ref{s5}) by $\mathcal{F}(\widetilde{\sigma})$. Clearly, we have
\[
\mathcal{F}(\widetilde{\sigma}(i))=\mathcal{F}(\widetilde{\sigma})\;\;{\rm for}\;i\in G_x(j_{\ell(\xi_2(k))}).
\]
For every $\widetilde{\sigma}\in\mathcal{F}_2(k)$, we fix an arbitrary $\widetilde{\sigma}(i)$, and denote the set of all these words $\widetilde{\sigma}(i)$ by $\widetilde{\mathcal{F}_2}(k)$. Then we have $\mathcal{F}_2(k)=\bigcup_{\widetilde{\sigma}\in\widetilde{\mathcal{F}_2}(k)}\mathcal{F}(\widetilde{\sigma})$, and for every pair of distinct words $\widetilde{\sigma},\widetilde{\rho}\in\widetilde{\mathcal{F}_2}(k)$, we have $\mathcal{F}(\widetilde{\sigma})\cap \mathcal{F}(\widetilde{\rho})=\emptyset$.
\begin{lemma}\label{lem3}
Let $\widetilde{\sigma}\in\widetilde{\mathcal{F}_2}(k)$ be as given in (\ref{s4}). For every $i\in G_x(j_{\xi_2(k)})$, we define $\widehat{\sigma}(i)$ to be (by interchanging the positions of $j_{\xi_2(k)}$ and $j_{\ell(\xi_2(k))}$ in (\ref{s5})):
\begin{eqnarray*}
\big((i_1,j_1),\ldots,(i_{\ell(\xi_2(k))-1},j_{\ell(\xi_2(k))-1}),(i,j_{\xi_2(k)})\big)\times\big(j_{\ell(\xi_2(k))+1},\ldots,j_{\xi_2(k)-1},j_{\ell(\xi_2(k))}\big)
\end{eqnarray*}
and let $\mathcal{G}(\widetilde{\sigma}):=\{\widehat{\sigma}(i): i\in G_x(j_{\xi_2(k)})\}$.
Then for every $i\in G_x(j_{\xi_2(k)})$, we have
\begin{enumerate}
\item[(a1)] $\widehat{\sigma}(i)\notin\widetilde{\Lambda}_1(k)$; $\lambda([\widehat{\sigma}(i)])<\eta^k$;
\item[(a2)]$\lambda([(\widehat{\sigma}(i))^-])\geq\eta^k$ and $(\widehat{\sigma}(i))^-\notin\widetilde{\Lambda}_1(k)$;
\end{enumerate}
\end{lemma}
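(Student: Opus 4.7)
The plan is to reduce both parts of the lemma to two elementary identities that link the $\Phi^{*}$-masses around $\widehat{\sigma}(i)$ with the $\Omega^{*}$-masses around $\mathcal{L}^{-1}(\widetilde{\sigma})$. Write $K:=\xi_{2}(k)$ and $L:=\ell(K)$; by the minimality in the definition of $\xi_{2}(k)$ we have $\ell(K-1)=L-1$, so both the $\Phi^{*}$-predecessor of $\widehat{\sigma}(i)$ and the $\Omega^{*}$-flat of $\mathcal{L}^{-1}(\widehat{\sigma}(i))$ use the ``second form'' in (\ref{flat}) and (\ref{predicessor}), which drops the last $G$-entry.

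First, I would expand the four relevant products via (\ref{lambda}), (\ref{diameter}) and (\ref{equivalent}) and verify, term by term, the identities
\begin{equation*}
\lambda([(\widehat{\sigma}(i))^{-}])=\mu_{(\mathcal{L}^{-1}(\widetilde{\sigma}))^{\flat}},\qquad \mu_{(\mathcal{L}^{-1}(\widehat{\sigma}(i)))^{\flat}}=\lambda([\widetilde{\sigma}^{-}]),
\end{equation*}
together with the ratio
\begin{equation*}
\lambda([\widehat{\sigma}(i)])=\frac{p_{i,j_{K}}\,q_{j_{L}}}{q_{j_{K}}}\,\lambda([\widetilde{\sigma}^{-}]).
\end{equation*}
These identities are just the observation that the swap of $j_{K}$ with $j_{L}$ in the last positions of the $G$- and $G_{y}$-parts of $\widetilde{\sigma}$ is exactly absorbed by the opposite move performed by $\flat$ on $\mathcal{L}^{-1}(\widetilde{\sigma})$.

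Second, I would combine these identities with the hypotheses. Since $\widetilde{\sigma}\in\widetilde{\mathcal{F}_{2}}(k)\subset\widetilde{\Lambda}_{k}$, the defining condition in (\ref{lambdak}) yields $\mu_{(\mathcal{L}^{-1}(\widetilde{\sigma}))^{\flat}}\geq\eta^{k}$; and the definition of $\widetilde{\mathcal{F}_{2}}(k)$ supplies some $\widetilde{\omega}\in\Gamma_{1}(k)$ with $\widetilde{\omega}\prec\widetilde{\sigma}$, whence $\widetilde{\omega}\prec\widetilde{\sigma}^{-}$ (exactly as in the proof of Lemma \ref{lem2}), giving $\lambda([\widetilde{\sigma}^{-}])\leq\lambda([\widetilde{\omega}])<\eta^{k}$. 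Then (a2) is immediate from the first identity, and $(\widehat{\sigma}(i))^{-}\notin\widetilde{\Lambda}_{k}$ since every element of $\widetilde{\Lambda}_{k}$ has $\lambda$-mass strictly below $\eta^{k}$. For (a1), the second identity gives $\mu_{(\mathcal{L}^{-1}(\widehat{\sigma}(i)))^{\flat}}<\eta^{k}$, violating the flat half of the condition in (\ref{lambdak}), so $\widehat{\sigma}(i)\notin\widetilde{\Lambda}_{k}$; and the ratio identity together with $p_{i,j_{K}}\leq q_{j_{K}}$ and $q_{j_{L}}\leq 1$ gives $\lambda([\widehat{\sigma}(i)])\leq\lambda([\widetilde{\sigma}^{-}])<\eta^{k}$.

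The main obstacle is purely bookkeeping: one must never conflate the $\Phi^{*}$-predecessor $\widetilde{\sigma}^{-}$ with the $\Omega^{*}$-flat $(\mathcal{L}^{-1}(\widetilde{\sigma}))^{\flat}$, since they differ precisely by a swap of $q_{j_{L}}$ and $q_{j_{K}}$, and it is this swap that both produces the identities and explains why $\widehat{\sigma}(i)$ ``inherits'' from $\widetilde{\sigma}$ the reverse pair of inequalities $\lambda([(\widehat{\sigma}(i))^{-}])\geq\eta^{k}>\lambda([\widehat{\sigma}(i)])$. Once the four products are written out at scale $K$, the proof is a direct consequence of $\mu_{(\mathcal{L}^{-1}(\widetilde{\sigma}))^{\flat}}\geq\eta^{k}>\lambda([\widetilde{\sigma}^{-}])$, both halves of which are already in our hands.
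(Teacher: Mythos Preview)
Your proposal is correct and follows essentially the same route as the paper: both arguments hinge on the two identities $\lambda([(\widehat{\sigma}(i))^{-}])=\mu_{(\mathcal{L}^{-1}(\widetilde{\sigma}))^{\flat}}$ and $\mu_{(\mathcal{L}^{-1}(\widehat{\sigma}(i)))^{\flat}}=\lambda([\widetilde{\sigma}^{-}])$, obtained from the rearrangement of the $G_y$-tail, and then feed in $\mu_{(\mathcal{L}^{-1}(\widetilde{\sigma}))^{\flat}}\geq\eta^{k}$ (from $\widetilde{\sigma}\in\widetilde{\Lambda}_k$) and $\lambda([\widetilde{\sigma}^{-}])<\eta^{k}$ (from (\ref{sz2}) via $\widetilde{\omega}\prec\widetilde{\sigma}^{-}$). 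The only cosmetic difference is that the paper bounds $\lambda([\widehat{\sigma}(i)])$ by $\mu_{(\mathcal{L}^{-1}(\widehat{\sigma}(i)))^{\flat}}$ directly, whereas you isolate the ratio $p_{i,j_K}q_{j_L}/q_{j_K}\leq 1$; these are the same inequality.
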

\begin{proof}
(a1) Note that $|\widehat{\sigma}(i)|=|\widetilde{\sigma}|=\xi_2(k)$.
By the definition of $\xi_2(k)$, the word $(\mathcal{L}^{-1}(\widehat{\sigma}(i)))^\flat$ takes the following form:
\[
((i_1,j_1),\ldots,(i_{\ell(\xi_2(k))-1},j_{\ell(\xi_2(k))-1},j_{\xi_2(k)},j_{\ell(\xi_2(k))+1},\ldots,j_{\xi_2(k)-1}).
\]
Note that $j_{\xi_2(k)},j_{\ell(\xi_2(k))+1}\ldots,j_{\xi_2(k)-1}$ is a rearrangement of $j_{\ell(\xi_2(k))+1}\ldots,j_{\xi_2(k)}$. We obtain that $\mu_{(\mathcal{L}^{-1}(\widehat{\sigma}(i)))^\flat}=\mu_{\mathcal{L}^{-1}(\widetilde{\sigma}^-)}$. This, (\ref{lambdak}) and (\ref{sz2}) yield
\begin{equation}\label{temp1}
\lambda([\widehat{\sigma}(i)])=\mu_{L^{-1}(\widehat{\sigma}(i))}<\mu_{(\mathcal{L}^{-1}(\widehat{\sigma}(i)))^\flat}=
\mu_{\mathcal{L}^{-1}(\widetilde{\sigma}^-)}\leq\mu_{\mathcal{L}^{-1}(\widetilde{\omega})}<\eta^k.
\end{equation}
By (\ref{lambdak}), one gets that $\mathcal{L}^{-1}(\widehat{\sigma}(i))\notin\Lambda_k$. Hence, $\widehat{\sigma}(i)\notin\widetilde{\Lambda}_1(k)$.

(a2) Since $|\widehat{\sigma}(i)|=\xi_2(k)$. By (\ref{s4}), we know that
\[
\widehat{\sigma}(i)^-=((i_1,j_1),\ldots,(i_{\ell(\xi_2(k))-1},j_{\ell(\xi_2(k))-1})\times(j_{\ell(\xi_2(k))+1},\ldots,j_{\xi_2(k)-1},j_{\ell(\xi_2(k))}).
\]
On the other hand, one easily sees that $(\mathcal{L}^{-1}(\widetilde{\sigma}))^\flat$ takes the following form:
\[
((i_1,j_1),\ldots,(i_{\ell(\xi_2(k))-1},j_{\ell(\xi_2(k))-1}),j_{\ell(\xi_2(k))},j_{\ell(\xi_2(k))+1},\ldots,j_{\xi_2(k)-1}).
\]
Since $\widetilde{\sigma}\in\widetilde{\Lambda}_1(k)$, we have $\mu_{\mathcal{L}^{-1}(\widetilde{\sigma}^\flat)}\geq\eta^k$. Note that $j_{\ell(\xi_2(k))+1},\ldots,j_{\xi_2(k)-1},j_{\ell(\xi_2(k))}$ is a rearrangement of $j_{\ell(\xi_2(k))},j_{\ell(\xi_2(k))+1},\ldots,j_{\xi_2(k)-1}$. By (\ref{lambdak}), we obtain
\begin{equation}\label{tem2}
\lambda([\widehat{\sigma}(i)^-])=\mu_{\mathcal{L}^{-1}(\widehat{\sigma}(i)^-)}=\mu_{(\mathcal{L}^{-1}(\widetilde{\sigma}))^\flat}\geq\eta^k.
\end{equation}
Again, by (\ref{lambdak}), we have that $\widehat{\sigma}(i)^-\notin\widetilde{\Lambda}_1(k)$.
\end{proof}

With the above preparations, we now define
\begin{equation}\label{2k}
\mathcal{G}_2(k):=\bigcup_{\widetilde{\sigma}\in\widetilde{\mathcal{F}}_2(k)}\mathcal{G}(\widetilde{\sigma}),\;\;\widetilde{\Lambda}_2(k):=
\big(\widetilde{\Lambda}_1(k)\setminus\mathcal{F}_2(k)\big)\cup\mathcal{G}_2(k).
\end{equation}

Using the next lemma, we give characterizations for the words in $\widetilde{\Lambda}_2(k)$. This will be useful in the construction of the maximal antichain which is mentioned above.
\begin{lemma}\label{lem6}
Let $\mathcal{G}_2(k)$ and $\widetilde{\Lambda}_2(k)$ be as defined in (\ref{2k}). Then
\begin{enumerate}
\item[(b1)] $\widetilde{\Lambda}_2(k)\setminus\mathcal{G}_2(k)\subset\widetilde{\Lambda}_1(k)$;
\item[(b2)] $\lambda([\widehat{\tau}])<\eta^k$ for all $\widehat{\tau}\in\widetilde{\Lambda}_2(k)$ and $\lambda([\widehat{\tau}^-])\geq\eta^k$ for every $\widehat{\tau}\in\mathcal{G}_2(k)$;

\item[(b3)] the words in $\widetilde{\Lambda}_2(k)\cap\big(\bigcup_{h=\xi_1(k)}^{\xi_2(k)}\Phi_h\big)$ are pairwise incomparable;
\end{enumerate}
\end{lemma}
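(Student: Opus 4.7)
The plan is to dispatch (b1) and (b2) by unwinding the definitions and invoking Lemma \ref{lem3}, and then to prove (b3) by splitting into a handful of cases according to where each word lives and, where necessary, converting a putative comparability relation into a contradiction with the measure bounds established in (b2).

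For (b1), note that $\widetilde{\Lambda}_2(k)\setminus\mathcal{G}_2(k)$ is, by construction, a subset of $\widetilde{\Lambda}_1(k)\setminus\mathcal{F}_2(k)$, hence of $\widetilde{\Lambda}_1(k)$. For (b2), if $\widehat{\tau}\in\widetilde{\Lambda}_2(k)\setminus\mathcal{G}_2(k)$, then $\widehat{\tau}=\mathcal{L}(\sigma)$ for some $\sigma\in\Lambda_k$, so by (\ref{lambdak}) and (\ref{equivalent}) we have $\lambda([\widehat{\tau}])=\mu_\sigma<\eta^k$. If $\widehat{\tau}\in\mathcal{G}_2(k)$, the inequality $\lambda([\widehat{\tau}])<\eta^k$ is exactly Lemma \ref{lem3}(a1), and $\lambda([\widehat{\tau}^-])\geq\eta^k$ is exactly Lemma \ref{lem3}(a2).

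For (b3), let $\widetilde{\tau}_1,\widetilde{\tau}_2$ be two distinct words in $\widetilde{\Lambda}_2(k)\cap\bigcup_{h=\xi_1(k)}^{\xi_2(k)}\Phi_h$. I would enumerate four cases. If both lie in $\Gamma_1(k)$ (so both have length strictly less than $\xi_2(k)$), Lemma \ref{z1} already gives incomparability. If both have length exactly $\xi_2(k)$, they are automatically incomparable, since distinct words in the same $\Phi_h$ cannot be comparable. If one, say $\widetilde{\tau}_1$, lies in $\Gamma_1(k)$ and the other, $\widetilde{\tau}_2$, lies in $\widetilde{\Lambda}_1(k)\cap\Phi_{\xi_2(k)}\setminus\mathcal{F}_2(k)$, then the very definition of $\mathcal{F}_2(k)$ tells us that no word of $\Gamma_1(k)$ is a predecessor of $\widetilde{\tau}_2$, and since $|\widetilde{\tau}_1|<|\widetilde{\tau}_2|$ the only possible comparability would be $\widetilde{\tau}_1\prec\widetilde{\tau}_2$, which is thus ruled out. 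The main case is when $\widetilde{\tau}_1\in\Gamma_1(k)$ and $\widetilde{\tau}_2\in\mathcal{G}_2(k)$; here I argue by contradiction. If $\widetilde{\tau}_1\prec\widetilde{\tau}_2$, then since $|\widetilde{\tau}_1|\leq\xi_2(k)-1=|\widetilde{\tau}_2^-|$ and by Remark \ref{rem3} the nested cylinders force $\widetilde{\tau}_1\preceq\widetilde{\tau}_2^-$, so $\lambda([\widetilde{\tau}_2^-])\leq\lambda([\widetilde{\tau}_1])<\eta^k$, contradicting $\lambda([\widetilde{\tau}_2^-])\geq\eta^k$ from (b2); the reverse direction $\widetilde{\tau}_2\prec\widetilde{\tau}_1$ is impossible by length.

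The step I expect to require the most care is the final case, because it is where the replacement construction could in principle create new comparabilities that were absent in $\widetilde{\Lambda}_1(k)$. The clean way out is exactly to feed the two bounds of (b2) against each other through Remark \ref{rem3}, which converts nesting of cylinders into the length ordering needed to conclude $\widetilde{\tau}_1\preceq\widetilde{\tau}_2^-$.
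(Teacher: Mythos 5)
Your proof is correct and follows essentially the same route as the paper's: (b1) is read off the definition, (b2) comes from Lemma \ref{lem3} (the paper cites the inequalities (\ref{temp1}) and (\ref{tem2}) established in its proof), and (b3) is proved by the same case split—Lemma \ref{z1} for two words in $\Gamma_1(k)$, equal length for two words in $\Phi_{\xi_2(k)}$, the definition of $\mathcal{F}_2(k)$ for the $\Gamma_1(k)$-versus-$(\widetilde{\Lambda}_1(k)\cap\Phi_{\xi_2(k)})\setminus\mathcal{F}_2(k)$ case, and the measure bounds from (b2) combined with a pass to the predecessor for the $\Gamma_1(k)$-versus-$\mathcal{G}_2(k)$ case. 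Your write-up is slightly more explicit than the paper's in the final case (spelling out the step $\widetilde{\tau}_1\preceq\widetilde{\tau}_2^-$ via Remark \ref{rem3}), but the underlying argument is identical.
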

\begin{proof}
(b1) This is an immediate consequence of (\ref{2k}).

(b2) By (\ref{temp1}) and (\ref{tem2}), we have, $\lambda([\widehat{\tau}])<\eta^k\leq \lambda([\widehat{\tau}^-])$ for every $\widehat{\tau}\in\mathcal{G}_2(k)$. For every $\widehat{\tau}\in\widetilde{\Lambda}_2(k)\setminus\mathcal{G}_2(k)\subset\widetilde{\Lambda}_1(k)$, we certainly have $\lambda([\widehat{\tau}])<\eta^k$.

(32) Let $\widetilde{\sigma}\in\Gamma_1(k)$ (see Lemma \ref{z1}) and $\widehat{\tau}\in\mathcal{G}_2(k)$. Then $|\widetilde{\sigma}|<|\widehat{\tau}|$. By (\ref{lambdak}) and (b1), we have $\lambda([\widetilde{\sigma}])<\eta^k\leq\lambda([\widehat{\tau}^-])$, which implies that $\widehat{\tau}$ and $\widetilde{\sigma}$ are incomparable. By the definition of $\mathcal{F}_2(k)$, for every $\widehat{\tau}\in(\widetilde{\Lambda}_1(k)\cap\Phi_{\xi_2(k)})\setminus\mathcal{F}_2(k)$ and $\widehat{\sigma}\in\Gamma_1(k)$, we have $\widehat{\tau},\widehat{\sigma}$ are incomparable; in addition, such a word $\widehat{\tau}$ is certainly incomparable with every word in $\mathcal{G}_2(k)$ since they are different words and are of the same length. Combining the above analysis and Lemma \ref{z1}, we obtain (b3).
\end{proof}

Let us proceed with the construction of $\widetilde{\Lambda}_M(k)$. Assume that for $l\geq 2$ and $1\leq h\leq l$, the sets
 \[
\mathcal{F}_h(k)\subset\widetilde{\Lambda}_{h-1}(k)\cap\widetilde{\Phi}_{\xi_h(k)},\mathcal{G}_h(k)\subset\widetilde{\Phi}_{\xi_h(k)}\setminus\widetilde{\Lambda}_{h-1}(k)\;
 {\rm and} \;\widetilde{\Lambda}_h(k)
 \]
 are defined such that the following (c1)-(c3) are fulfilled for all $2\leq h\leq l$:
\begin{enumerate}
\item[(c1)]$\widetilde{\Lambda}_h(k):=(\widetilde{\Lambda}_{h-1}(k)\setminus\mathcal{F}_h(k))\cup\mathcal{G}_h(k)=
\big(\widetilde{\Lambda}_1(k)\setminus\bigcup_{p=1}^h\mathcal{F}_p(k)\big)\cup\big(\bigcup_{p=1}^h\mathcal{G}_p(k)\big)$;
\item[(c2)]$\lambda([\widehat{\tau}])<\eta^k$ for all $\widehat{\tau}\in\widetilde{\Lambda}_h(k)$ and $\lambda([\widehat{\tau}^-])\geq\eta^k$ for every $\widehat{\tau}\in\mathcal{G}_h(k)$;
\item[(c3)]the words in $\widetilde{\Lambda}_h(k)\cap\big(\bigcup_{p=\xi_1(k)}^{\xi_h(k)}\Phi_p\big)$ are pairwise incomparable.
\end{enumerate}
Next, we define three sets $\mathcal{F}_{l+1}(k),\mathcal{G}_{l+1}(k)$ and $\widetilde{\Lambda}_{l+1}(k)$ such that (c1)-(c3) hold for $h=l+1$.

\emph{Claim 1}:  The words in the following set are pairwise incomparable:
\[
\Gamma_l(k):=\bigcup_{h=\xi_1(k)}^{\xi_{l+1}(k)-1}\big(\widetilde{\Lambda}_l(k)\cap\Phi_h\big).
\]
This can be seen as follows. If $\xi_{l+1}(k)=\xi_l(k)+1$, then
\[
\Gamma_l(k):=\bigcup_{h=\xi_1(k)}^{\xi_l(k)}\big(\widetilde{\Lambda}_l(k)\cap\Phi_h\big);
\]
and the claim follows from (c3). Next we assume that $\xi_{l+1}(k)>\xi_l(k)+1$.
By (c1), we have
\[
\mathcal{H}_l:=\widetilde{\Lambda}_l(k)\cap\bigg(\bigcup_{h=\xi_l(k)+1}^{\xi_{l+1}(k)-1}\Phi_h\bigg)
=\widetilde{\Lambda}_1(k)\cap\bigg(\bigcup_{h=\xi_l(k)+1}^{\xi_{l+1}(k)-1}\Phi_h\bigg)\subset\widetilde{\Lambda}_1(k).
\]
For every pair of distinct words $\widehat{\rho},\widehat{\tau}\in\mathcal{H}_l$, if $|\widehat{\tau}|=|\widehat{\rho}|$, then they are certainly incomparable; otherwise, we may assume that $|\widehat{\tau}|>|\widehat{\rho}|$. Note that
\[
\ell(|\widehat{\tau}|)=\ell(|\widehat{\tau}|-1)=\ell(\xi_l(k)).
\]
We have $\mathcal{L}^{-1}(\widehat{\tau}^-)=(\mathcal{L}^{-1}(\widehat{\tau}))^\flat$. Hence,
\begin{equation}\label{sz3}
\lambda([\widehat{\tau}^-])=\mu_{(\mathcal{L}^{-1}(\widehat{\tau})^-)}=\mu_{(\mathcal{L}^{-1}(\widehat{\tau}))^\flat}\geq\eta^k>\lambda([\widehat{\rho}]).
\end{equation}
This implies that $\widehat{\rho},\widehat{\tau}$ are incomparable. By (c2), we know that (\ref{sz3}) also holds for $\widehat{\tau}\in\mathcal{H}_l$ and $\widehat{\rho}\in\widetilde{\Lambda}_l(k)\cap\big(\bigcup_{h=\xi_1(k)}^{\xi_l(k)}\Phi_h\big)$. Since $|\widehat{\tau}|\geq\xi_l(k)+1>|\widehat{\rho}|$, we obtain that $\widehat{\rho},\widehat{\tau}$ are incomparable.
Combining the above analysis and (c3), the claim follows.

We denote by $\mathcal{F}_{l+1}(\widetilde{\sigma})$ the set of all the words $\widetilde{\sigma}\in\widetilde{\Lambda}_l(k)\cap\Phi_{\xi_{l+1}(k)}$ such that $\widetilde{\omega}\prec\widetilde{\sigma}$ for some $\widetilde{\omega}\in\Gamma_l(k)$. For every $\widetilde{\sigma}\in\mathcal{F}_{l+1}(k)$, let $\mathcal{F}(\widetilde{\sigma})$ and $\mathcal{G}(\widetilde{\sigma})$ be defined in the same way as we did for $\widetilde{\sigma}\in\mathcal{F}_2(k)$ and let $\widetilde{\mathcal{F}}_{l+1}(k)$ be defined accordingly. We define
\[
\mathcal{G}_{l+1}(k):=\bigcup_{\widetilde{\sigma}\in\widetilde{\mathcal{F}}_{l+1}(k)}\mathcal{G}(\widetilde{\sigma});\;\;
\widetilde{\Lambda}_{l+1}(k):=\big(\widetilde{\Lambda}_l(k)\setminus\mathcal{F}_{l+1}(k)\big)\cup
\mathcal{G}_{l+1}(k).
 \]
Then we have $\widetilde{\Lambda}_{l+1}(k)\setminus\mathcal{G}_{l+1}(k)\subset\widetilde{\Lambda}_l(k)$. Hence, by (c2), $\lambda([\widehat{\tau}])
 <\eta^k$ for every $\widehat{\tau}\in\widetilde{\Lambda}_{l+1}(k)\setminus\mathcal{G}_{l+1}(k)$. Further, By (c2) and the argument in Lemma \ref{lem3}, for $\widehat{\tau}\in \mathcal{G}_{l+1}(k)$ and $\widetilde{\sigma}\in \Gamma_l(k)$ (cf. (\ref{temp1}) and (\ref{tem2})), we have
 \[
 \lambda([\widetilde{\sigma}])<\eta^k\leq\lambda([\widehat{\tau}^-]);\;\lambda([\widehat{\tau}])
 \leq\lambda([\widetilde{\omega}])<\eta^k.
 \]
 This implies that $\widetilde{\sigma}\nprec\widehat{\tau}$. Since $|\widehat{\tau}|>|\widetilde{\sigma}|$, we obtain that $\widetilde{\sigma},\widehat{\tau}$ are incomparable. As in the proof of Lemma \ref{lem6} (b3), by the definition of $\mathcal{G}_{l+1}(k)$, for every pair
 \[
 \widehat{\tau}\in(\widetilde{\Lambda}_{l+1}(k)\cap\Phi_{\xi_{l+1}(k)})\setminus\mathcal{G}_{l+1}(k),\;\;\widehat{\rho}\in\Gamma_l(k)\cup\mathcal{G}_{l+1}(k)
  \]
  $\widehat{\tau},\widehat{\rho}$ are incomparable. Combining the above analysis with (c3), we obtain that the words in  $\widetilde{\Lambda}_{l+1}(k)\cap\big(\bigcup_{h=\xi_1(k)}^{\xi_{l+1}(k)}\Phi_h\big)$ are pairwise incomparable. Thus, (c1)-(c3) hold with $l+1$ in place of $l$.

By induction, we obtain sets $\mathcal{F}_M(k), \widetilde{\mathcal{F}}_M(k), \mathcal{G}_M(k)$ and $\widetilde{\Lambda}_M(k)$ such that (c1)-(c3) are fulfilled for $l=M$. One can see that
\[
\widetilde{\Lambda}_M(k)=\bigg(\widetilde{\Lambda}_1(k)\setminus\bigcup_{l=2}^M\mathcal{F}_l(k)\bigg)\cup\bigg(\bigcup_{l=2}^M\mathcal{G}_l(k)\bigg).
\]

\begin{lemma}\label{antichain}
$\widetilde{\Lambda}_M(k)$ is a finite maximal antichain in $W$.
\end{lemma}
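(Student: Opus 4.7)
The plan is to establish two properties of $\widetilde{\Lambda}_M(k)$: finiteness together with the antichain property in $\Phi^*$, and the covering $W = \bigcup_{\widetilde{\tau}\in\widetilde{\Lambda}_M(k)}[\widetilde{\tau}]$. Finiteness is immediate from the inductive construction. For the antichain property, I would apply (c3) at $h = M$ to handle all words of length $\leq \xi_M(k)$; for the remaining words (those of length $> \xi_M(k)$, which all lie in $\widetilde{\Lambda}_1(k)$ and have $\ell$-value equal to $\ell(\overline{\xi}(k))$), a stopping-time argument shows they are incomparable with every other word of $\widetilde{\Lambda}_M(k)$ --- essentially because $\mathcal{L}^{-1}(\widetilde{\omega})^\flat$ agrees with $\widetilde{\omega}^-$ when $\ell$ does not change at the last step, so $\lambda([\widetilde{\omega}^-]) \geq \eta^k$, and any supposed comparable word would contradict either this inequality or (c2) applied at $h = M$.

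For the covering, my strategy is to prove $\sum_{\widetilde{\tau}\in\widetilde{\Lambda}_M(k)}\lambda([\widetilde{\tau}])=1$ by induction on $l$ and then conclude via the disjointness coming from the antichain property together with the full support of $\lambda$. The base case follows from $\sum_{\widetilde{\tau}\in\widetilde{\Lambda}_1(k)}\lambda([\widetilde{\tau}])=\sum_{\sigma\in\Lambda_k}\mu_\sigma=1$ by (\ref{equivalent}) and (\ref{temp01}). In the inductive step, since $\mathcal{G}_l(k)\cap\widetilde{\Lambda}_{l-1}(k)=\emptyset$ by construction, preservation of the total mass reduces to $\sum_{\widetilde{\tau}\in\mathcal{F}_l(k)}\lambda([\widetilde{\tau}]) = \sum_{\widetilde{\tau}\in\mathcal{G}_l(k)}\lambda([\widetilde{\tau}])$. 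Using the orbit decompositions $\mathcal{F}_l(k) = \bigsqcup_{\widetilde{\sigma}\in\widetilde{\mathcal{F}}_l(k)}\mathcal{F}(\widetilde{\sigma})$ and $\mathcal{G}_l(k) = \bigsqcup_{\widetilde{\sigma}\in\widetilde{\mathcal{F}}_l(k)}\mathcal{G}(\widetilde{\sigma})$, this further reduces to the per-representative identity
\[
\sum_{i\in G_x(j_{\ell(\xi_l(k))})}\lambda([\widetilde{\sigma}(i)])=\sum_{i\in G_x(j_{\xi_l(k)})}\lambda([\widehat{\sigma}(i)]).
\]
A direct calculation using $\sum_{i\in G_x(j)}p_{ij}=q_j$ shows both sides equal
\[
\Big(\prod_{a=1}^{\ell(\xi_l(k))-1}p_{i_aj_a}\Big)\prod_{b\in\{\ell(\xi_l(k)),\,\ell(\xi_l(k))+1,\,\ldots,\,\xi_l(k)\}}q_{j_b},
\]
since the swap in Lemma \ref{lem3} that defines $\widehat{\sigma}(i)$ from $\widetilde{\sigma}$ merely permutes the multiset of $y$-indices contributing $q$-factors.

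Finally, by Remark \ref{rem3} and the antichain property, the cylinders $\{[\widetilde{\tau}]:\widetilde{\tau}\in\widetilde{\Lambda}_M(k)\}$ are pairwise disjoint, so the union $U := \bigcup_{\widetilde{\tau}\in\widetilde{\Lambda}_M(k)}[\widetilde{\tau}]$ satisfies $\lambda(U) = 1$. Being a finite union of cylinders, $U$ is clopen in $W$, and hence so is $W\setminus U$. Since all $p_{ij}$ and $q_j$ are positive, $\lambda$ has full support on $W$, so every non-empty open subset of $W$ has positive $\lambda$-measure; consequently $W\setminus U=\emptyset$, i.e.\ $U = W$. The main obstacle in this plan is verifying the per-representative identity and handling the edge case in the antichain argument for words of length greater than $\xi_M(k)$; both rely on the delicate interplay between the $\Omega^*$-order (governed by $\flat$) and the $\Phi^*$-order (governed by the operation $-$ in (\ref{predicessor})), which coincide precisely when $\ell$ does not increase at the corresponding step.
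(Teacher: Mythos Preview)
Your proposal is correct and follows essentially the same approach as the paper. The antichain argument (using (c3) at $h=M$ plus the $\flat$/$-$ coincidence when $\ell$ does not increase) matches the paper's, and your per-representative mass identity is exactly what the paper states as equation (\ref{s7}); the only minor difference is in the final covering step, where the paper argues by extracting a disjoint cylinder via Remark~\ref{rem3} to contradict $\sum\lambda=1$, while you use the equivalent observation that $U$ is clopen and $\lambda$ has full support---both routes are short and equally valid.
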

\begin{proof}
By the construction of $\widetilde{\Lambda}_M(k)$, we know that, the words in $\widetilde{\Lambda}_M(k)$ with length not exceeding $\xi_M(k)$, are pairwise incomparable. Now, if $\xi_M(k)<\overline{\xi}(k)$, then for all $\xi_M(k)+1\leq h\leq\overline{\xi}(k)$, we have $\ell(h)=\ell(\xi_M(k))$ and
 \[
\widetilde{\Lambda}_M(k)\cap\bigg(\bigcup_{h=\xi_M(k)+1}^{\overline{\xi}(k)}\Phi_h\bigg)\subset\widetilde{\Lambda}_1(k).
\]
Using the same argument as that in the proof for Claim 1, one can see that the words in $\widetilde{\Lambda}_M(k)$ are pairwise incomparable.

 By the definitions of $\mathcal{F}_h(k)$ and $\mathcal{G}_h(k)$, for every $\widetilde{\sigma}\in \mathcal{F}_h(k)$ and $2\leq h\leq M$, we have (see Lemmas \ref{lem2} and \ref{lem3}),
 \begin{equation}\label{s7}
\sum_{\widetilde{\rho}\in\mathcal{F}(\widehat{\sigma})}\lambda(\widehat{\rho})=\sum_{\widehat{\tau}\in\mathcal{G}(\widetilde{\sigma})} \lambda([\widehat{\tau}]).
 \end{equation}
 It follows that
 \[
 \sum_{\widehat{\tau}\in\widetilde{\Lambda}_M(k)}\lambda([\widehat{\tau}])=\sum_{\widehat{\tau}\in\widetilde{\Lambda}_1(k)}\lambda([\widehat{\tau}])
 =\sum_{\widehat{\tau}\in\widetilde{\Lambda}_1(k)}\mu(F_{\mathcal{L}^{-1}(\widehat{\tau})})=1.
 \]
Suppose that $\widehat{\rho}\notin\bigcup_{\widehat{\tau}\in\widetilde{\Lambda}_M(k)}[\tau]$ for some $\widehat{\rho}\in W$. Then by Remark \ref{rem3}, there exists some $\widehat{\zeta}\in\Phi^*$ with $|\widehat{\zeta}|>\overline{\xi}(k)$ such that $\widehat{\rho}\in[\widehat{\zeta}]$ and $[\widehat{\zeta}]\cap[\widehat{\tau}]=\emptyset$ for all $\widehat{\tau}\in\widetilde{\Lambda}_M(k)$; because, otherwise, we would have $\widehat{\rho}\in[\widehat{\zeta}]\subset\bigcup_{\widehat{\tau}\in\widetilde{\Lambda}_M(k)}[\tau]$. This implies that $\sum_{\widehat{\tau}\in\widetilde{\Lambda}_M(k)}\lambda([\widehat{\tau}])<1$, which is a contradiction. Thus, we obtain
\[
W=\bigcup_{\widehat{\tau}\in\widetilde{\Lambda}_M(k)}[\tau].
\]
Thus, we conclude that $\widetilde{\Lambda}_M(k)$ is a finite maximal antichain.
\end{proof}
 Using the following lemma, we establish an estimate for the difference that is caused by the replacements of $\mathcal{F}(\widetilde{\sigma})$ with $\mathcal{G}(\widetilde{\sigma})$ for $\widetilde{\sigma}\in\mathcal{F}_h(k)$ and $2\leq h\leq M$.
\begin{lemma}\label{lem4}
There exists a constant $C_1$ such that, for every $\widetilde{\sigma}\in\mathcal{F}_h(k)$,
\[
\bigg|\sum_{\widetilde{\omega}\in\mathcal{F}(\widetilde{\sigma})}\lambda([\widetilde{\omega}])\log\lambda([\widetilde{\omega}])
-\sum_{\widehat{\tau}\in\mathcal{G}(\widetilde{\sigma})}\lambda([\widehat{\tau}])\log\lambda([\widehat{\tau}])\bigg|\leq C_1\sum_{\widetilde{\omega}\in\mathcal{F}(\widetilde{\sigma})}\lambda([\widetilde{\omega}]).
\]
\end{lemma}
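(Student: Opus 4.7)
The plan is to exploit the explicit product structure of $\lambda$ from (\ref{lambda}) together with the mass-conservation identity (\ref{s7}), so that the computation reduces to bounding a handful of elementary quantities. Let $\widetilde{\sigma}\in\mathcal{F}_h(k)$ be written in the form (\ref{s4}) (with $\xi_h(k)$ in place of $\xi_2(k)$), and abbreviate $\ell:=\ell(\xi_h(k))$, $a:=j_{\ell}$, $b:=j_{\xi_h(k)}$. Factor out the parts of the product that are common to every member of the two families: set
\[
A:=\prod_{r=1}^{\ell-1}p_{i_r j_r},\qquad B:=\prod_{r=\ell+1}^{\xi_h(k)-1}q_{j_r}.
\]
By the definition of $\widetilde{\sigma}(i)$ (as in (\ref{s5})) and $\widehat{\sigma}(i)$ (as in Lemma \ref{lem3}),
\[
\lambda([\widetilde{\sigma}(i)])=AB\,q_b\,p_{i,a}\quad(i\in G_x(a)),\qquad \lambda([\widehat{\sigma}(i)])=AB\,q_a\,p_{i,b}\quad(i\in G_x(b)).
\]
Summing and using $\sum_{i\in G_x(j)}p_{i,j}=q_j$ yields $T:=\sum_{\widetilde{\omega}\in\mathcal{F}(\widetilde{\sigma})}\lambda([\widetilde{\omega}])=AB\,q_a q_b=\sum_{\widehat{\tau}\in\mathcal{G}(\widetilde{\sigma})}\lambda([\widehat{\tau}])$, recovering (\ref{s7}).

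Next I would split each logarithm as $\log\lambda([\widetilde{\sigma}(i)])=\log(AB q_b)+\log p_{i,a}$ and likewise for $\widehat{\sigma}(i)$, and evaluate the two entropy-like sums. Writing $H_j:=\sum_{i\in G_x(j)}p_{i,j}\log p_{i,j}$, the computation gives
\begin{align*}
\sum_{\widetilde{\omega}\in\mathcal{F}(\widetilde{\sigma})}\lambda([\widetilde{\omega}])\log\lambda([\widetilde{\omega}])
&= T\log(AB q_b)+AB q_b\,H_a,\\
\sum_{\widehat{\tau}\in\mathcal{G}(\widetilde{\sigma})}\lambda([\widehat{\tau}])\log\lambda([\widehat{\tau}])
&= T\log(AB q_a)+AB q_a\,H_b.
\end{align*}
Subtracting, the $\log A$ and $\log B$ contributions cancel, so the difference becomes
\[
T\log(q_b/q_a)+\frac{T}{q_a}H_a-\frac{T}{q_b}H_b.
\]

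Finally I would bound each of the three remaining pieces by a constant times $T$. One has $|\log(q_b/q_a)|\leq\log(\overline{q}/\underline{q})$; since $p_{ij}\in[\underline{p},1]$, the map $x\mapsto x\log x$ is bounded on $[\underline{p},1]$, and $|G_x(j)|\leq|G|$, so $|H_j|\leq|G|\,\underline{p}^{-1}|\underline{p}\log\underline{p}|$ (or any similar explicit bound); and $1/q_j\leq \underline{q}^{-1}$. Setting
\[
C_1:=\log(\overline{q}/\underline{q})+2\underline{q}^{-1}\cdot|G|\cdot\max_{t\in[\underline{p},1]}|t\log t|
\]
gives the desired estimate $|S_1-S_2|\leq C_1 T$, with $C_1$ depending only on the data $\underline{p},\underline{q},\overline{q},|G|$, hence uniform in $k,h$ and $\widetilde{\sigma}$. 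There is no real obstacle here: the whole point is that the replacement operation $\mathcal{F}(\widetilde{\sigma})\leadsto\mathcal{G}(\widetilde{\sigma})$ merely interchanges the roles of the two symbols $a,b$ in one $p$-factor and one $q$-factor, so after factoring out the common prefix $A$ and interior block $B$, the discrepancy in the logarithmic integral is controlled by a finite-alphabet constant; the only care needed is to keep track of which coordinates are being exchanged.
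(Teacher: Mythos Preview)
Your proof is correct and follows essentially the same approach as the paper: both arguments factor out the common block (what the paper calls $\sigma^\sharp$ and you call $AB$), expand each sum so that the leading term $T\log(AB)$ cancels in the difference, and then bound the residual terms---which involve only the single symbols $a=j_{\ell}$, $b=j_{\xi_h(k)}$---by constants depending solely on $\underline{p},\underline{q},\overline{q}$ and ${\rm card}(G)$. Your notation is slightly more streamlined, but the substance of the argument is identical.
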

\begin{proof}
Let $\widetilde{\sigma}\in\mathcal{F}_h(k)$ be as given in (\ref{s4}). We write
\[
\sigma^\sharp:=((i_1,j_1),\ldots,(i_{\ell(\xi_2(k))-1},j_{\ell(\xi_2(k))-1})\times(j_{\ell(\xi_2(k))+1},\ldots,j_{\xi_2(k)-1}).
\]
By the definition of $\mathcal{F}(\widetilde{\sigma})$ and that of the measure $\lambda$, we have
\begin{eqnarray}\label{t1}
&&\sum_{\widetilde{\omega}\in\mathcal{F}(\widetilde{\sigma})}\lambda([\widetilde{\omega}])\log\lambda([\widetilde{\omega}])\nonumber\\
&&=\sum_{i\in G_x(j_{\ell(\xi_2(k))})}(\lambda([\sigma^\sharp])p_{ij_{\ell(\xi_2(k))}}q_{j_{\xi_2(k)}})\log(\lambda([\sigma^\sharp])
p_{ij_{\ell(\xi_2(k))}}q_{j_{\xi_2(k)}})\nonumber\\
&&=q_{j_{\xi_2(k)}}q_{j_{\ell(\xi_2(k))}}(\lambda([\sigma^\sharp])\log(\lambda([\sigma^\sharp])\nonumber\\&&\;\;+
\lambda([\sigma^\sharp])q_{j_{\xi_2(k)}}\sum_{i\in G_x(j_{\ell(\xi_2(k))})}p_{ij_{\ell(\xi_2(k))}}\log p_{ij_{\ell(\xi_2(k))}}\nonumber\\&&\;\;+
\lambda([\sigma^\sharp])q_{j_{\xi_2(k)}}q_{j_{\ell(\xi_2(k))}}\log q_{j_{\xi_2(k)}}.
\end{eqnarray}
In an analogous manner, we have
\begin{eqnarray}\label{t2}
\sum_{\widehat{\tau}\in\mathcal{G}(\widetilde{\sigma})}\lambda([\widehat{\tau}])\log\lambda([\widehat{\tau}])&&=
\frac{}{}q_{j_{\ell(\xi_2(k))}}q_{j_{\xi_2(k)}}(\lambda([\sigma^\sharp])\log(\lambda([\sigma^\sharp])\nonumber\\&&\;\;+
\lambda([\sigma^\sharp])q_{j_{\ell(\xi_2(k))}}\sum_{i\in G_x(j_{\xi_2(k)})}p_{ij_{\xi_2(k)}}\log p_{ij_{\xi_2(k)}}\nonumber\\
&&\;\;+\lambda([\sigma^\sharp])q_{j_{\ell(\xi_2(k))}}q_{j_{\xi_2(k)}}\log q_{j_{\ell(\xi_2(k))}}.
\end{eqnarray}
Further, one can observe that
\begin{eqnarray}\label{t3}
\lambda(\mathcal{F}(\widetilde{\sigma})):=\sum_{\widetilde{\omega}\in\mathcal{F}(\widetilde{\sigma})}\lambda([\widetilde{\omega}])
=q_{j_{\xi_2(k)}}q_{j_{\ell(\xi_2(k))}}\lambda([\sigma^\sharp])\geq\underline{q}^2\lambda([\sigma^\sharp]).
\end{eqnarray}
Hence, $\lambda([\sigma^\sharp])\leq\underline{q}^{-2}\lambda(\mathcal{F}(\widetilde{\sigma}))$. Set
$C_0:=-2\overline{q}^2\log\underline{p}$. By (\ref{t1})-(\ref{t3}), we obtain
\begin{eqnarray*}
\bigg|\sum_{\widetilde{\omega}\in\mathcal{F}(\widetilde{\sigma})}\lambda([\widetilde{\omega}])\log\lambda([\widetilde{\omega}])
-\sum_{\widehat{\tau}\in\mathcal{G}(\widetilde{\sigma})}\lambda([\widehat{\tau}])\log\lambda([\widehat{\tau}])\bigg|\leq \underline{q}^{-2}C_0\lambda(\mathcal{F}(\widetilde{\sigma})).
\end{eqnarray*}
The lemma follows by defining $C_1:=\underline{q}^{-2}C_0$.
\end{proof}
 We are now able to determine the asymptotic order for $(s_k)_{k=1}^\infty$. We have
\begin{lemma}\label{lem5}
Let $s_k$ be as defined in (\ref{sk}). Then we have $|s_k-s_0|\lesssim k^{-1}$.
\end{lemma}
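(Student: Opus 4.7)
The plan is to apply Lemma \ref{lem01} to the finite maximal antichain $\widetilde{\Lambda}_M(k)$ constructed above, and then show that replacing $\widetilde{\Lambda}_k$ by $\widetilde{\Lambda}_M(k)$ costs only $O(1/k)$ in the ratio defining $s_k$.

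First, using the identification (\ref{equivalent}), I would rewrite
\[
s_k=\frac{\sum_{\widetilde{\sigma}\in\widetilde{\Lambda}_1(k)}\lambda([\widetilde{\sigma}])\log\lambda([\widetilde{\sigma}])}
{\sum_{\widetilde{\sigma}\in\widetilde{\Lambda}_1(k)}\lambda([\widetilde{\sigma}])\log m^{-|\widetilde{\sigma}|}}.
\]
Since $\widetilde{\Lambda}_M(k)$ is a finite maximal antichain in $W$ (Lemma \ref{antichain}) and all the replacement levels $\xi_l(k)$ lie between $\underline{\xi}(k)$ and $\overline{\xi}(k)$, one has
$\underline{l}(\widetilde{\Lambda}_M(k))=\underline{\xi}(k)\asymp k$ and $\overline{l}(\widetilde{\Lambda}_M(k))=\overline{\xi}(k)\asymp k$ by (\ref{xikcomparable}). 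Lemma \ref{lem01} then gives $|t(\widetilde{\Lambda}_M(k))-s_0|\lesssim k^{-1}$.

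Next I would compare $s_k$ with $t(\widetilde{\Lambda}_M(k))$ termwise. The denominators are actually equal: passing from $\widetilde{\Lambda}_{l-1}(k)$ to $\widetilde{\Lambda}_l(k)$ swaps $\mathcal{F}(\widetilde{\sigma})$ for $\mathcal{G}(\widetilde{\sigma})$, and all words in either set have the common length $\xi_l(k)$, so by (\ref{s7}) their contribution $\lambda(\mathcal{F}(\widetilde{\sigma}))\log m^{-\xi_l(k)}=\lambda(\mathcal{G}(\widetilde{\sigma}))\log m^{-\xi_l(k)}$ is preserved. The numerators differ by
\[
\Delta_k:=\sum_{l=2}^{M}\sum_{\widetilde{\sigma}\in\widetilde{\mathcal{F}}_l(k)}\biggl(\sum_{\widehat{\tau}\in\mathcal{G}(\widetilde{\sigma})}\lambda([\widehat{\tau}])\log\lambda([\widehat{\tau}])-\sum_{\widetilde{\omega}\in\mathcal{F}(\widetilde{\sigma})}\lambda([\widetilde{\omega}])\log\lambda([\widetilde{\omega}])\biggr),
\]
and Lemma \ref{lem4} bounds each summand by $C_1\lambda(\mathcal{F}(\widetilde{\sigma}))$. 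The families $\{\mathcal{F}(\widetilde{\sigma}):\widetilde{\sigma}\in\widetilde{\mathcal{F}}_l(k),\;2\leq l\leq M\}$ consist of pairwise disjoint subsets of $\widetilde{\Lambda}_1(k)$ (distinct lengths for distinct $l$; disjoint by construction within a fixed level), so $\sum_{l,\widetilde{\sigma}}\lambda(\mathcal{F}(\widetilde{\sigma}))\leq 1$, giving $|\Delta_k|\leq C_1$.

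Finally, the common denominator satisfies
$\bigl|\sum_{\widetilde{\sigma}\in\widetilde{\Lambda}_1(k)}\lambda([\widetilde{\sigma}])\log m^{-|\widetilde{\sigma}|}\bigr|\geq(\log m)\underline{\xi}(k)\asymp k$, hence $|s_k-t(\widetilde{\Lambda}_M(k))|\leq|\Delta_k|/(k\log m)\lesssim k^{-1}$. Combining with $|t(\widetilde{\Lambda}_M(k))-s_0|\lesssim k^{-1}$ yields the claim by the triangle inequality. The main subtlety is merely bookkeeping: verifying that the replacement procedure preserves the denominator exactly while only perturbing the numerator by $O(1)$; the heavy lifting (building $\widetilde{\Lambda}_M(k)$ as a maximal antichain and Lemma \ref{lem4}) has already been done.
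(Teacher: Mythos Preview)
Your proposal is correct and follows essentially the same route as the paper's proof: both rewrite $s_k$ via (\ref{equivalent}), show the denominators of $s_k$ and $t(\widetilde{\Lambda}_M(k))$ coincide using (\ref{s7}) and the common length $\xi_l(k)$, bound the numerator difference $\Delta_k$ by $C_1$ via Lemma~\ref{lem4} and the disjointness of the families $\mathcal{F}(\widetilde{\sigma})$ inside $\widetilde{\Lambda}_1(k)$, and then combine with Lemma~\ref{lem01} applied to the maximal antichain $\widetilde{\Lambda}_M(k)$ through the triangle inequality. The only cosmetic difference is the order of presentation; the argument and all key ingredients are identical.
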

\begin{proof}
By the construction of $\widetilde{\Lambda}_M(k)$, we have
\begin{eqnarray}\label{s6}
\widetilde{\Lambda}_M(k)=\bigg(\widetilde{\Lambda}_k\setminus
\bigcup_{h=2}^M\bigcup_{\widetilde{\sigma}\in\widetilde{\mathcal{F}}_h(k)}\mathcal{F}(\widetilde{\sigma})\bigg)\cup
\bigg(\bigcup_{h=2}^M\bigcup_{\widetilde{\sigma}\in\widetilde{\mathcal{F}}_h(k)}G(\widetilde{\sigma})\bigg).
\end{eqnarray}
By the definition of $G(\widetilde{\sigma})$, we have $|\widehat{\tau}|=|\widetilde{\sigma}|$ for all $\widehat{\tau}\in G(\widetilde{\sigma})$ and $\widetilde{\sigma}\in\mathcal{F}(\widetilde{\sigma})$.
Thus, by(\ref{s7}), for every $2\leq h\leq M$ and $\widetilde{\sigma}\in\mathcal{F}_h(k)$, we have
\begin{eqnarray*}
\sum_{\widetilde{\rho}\in\mathcal{F}_h(k)}\lambda([\widetilde{\rho}])\log m^{-|\widetilde{\rho}|}=\sum_{\widehat{\tau}\in\mathcal{G}_h(k)}\lambda([\widehat{\tau}]\log m^{-|\widehat{\tau}|}.
\end{eqnarray*}
This, along with (\ref{s6}), yields
\begin{eqnarray}\label{s8}
\sum_{\sigma\in\widetilde{\Lambda}_M(k)}\lambda([\widehat{\tau}])\log m^{-|\widehat{\tau}|}=\sum_{\widetilde{\sigma}\in\widetilde{\Lambda}_k}\lambda([\widetilde{\sigma}]\log m^{-|\widetilde{\sigma}|}.
\end{eqnarray}
Also, by (\ref{s6}), we have
\begin{eqnarray*}
\sum_{\widehat{\tau}\in\widetilde{\Lambda}_M(k)}\lambda([\widehat{\tau}])\log \lambda([\widehat{\tau}])&=&\sum_{\widehat{\tau}\in\widetilde{\Lambda}_k\setminus\bigcup_{h=2}^M\mathcal{F}_h(k)}\lambda([\widehat{\tau}])\log \lambda([\widehat{\tau}])\\&&+\sum_{h=2}^{M}\sum_{\widetilde{\sigma}\in\widetilde{\mathcal{F}}_h(k)}\sum_{\widehat{\tau}\in G(\widetilde{\sigma})}\lambda([\widehat{\tau}])\log \lambda([\widehat{\tau}]).
\end{eqnarray*}
Using this and Lemma \ref{lem4}, we deduce
\begin{eqnarray*}
\Delta_k:&=&\bigg|\sum_{\widehat{\tau}\in\widetilde{\Lambda}_M(k)}\lambda([\widehat{\tau}])\log \lambda([\widehat{\tau}])-\sum_{\widetilde{\sigma}\in\widetilde{\Lambda}_k}\lambda([\widetilde{\sigma}])\log \lambda([\widetilde{\sigma}])\bigg|\\
&=&\bigg|\sum_{h=2}^{M}\sum_{\widetilde{\sigma}\in\widetilde{\mathcal{F}}_h(k)}\sum_{\widehat{\tau}\in G(\widetilde{\sigma})}\lambda([\widehat{\tau}])\log \lambda([\widehat{\tau}])-\sum_{h=2}^{M}\sum_{\widetilde{\sigma}\in\widetilde{\mathcal{F}}_h(k)}\sum_{\widetilde{\omega}\in \mathcal{F}(\widetilde{\sigma})}\lambda([\widetilde{\omega}])\log \lambda([\widetilde{\omega}])\bigg|
\\&\leq&\sum_{h=2}^{M}\sum_{\widetilde{\sigma}\in\widetilde{\mathcal{F}}_h(k)}\bigg|\sum_{\widehat{\tau}\in G(\widetilde{\sigma})}\lambda([\widehat{\tau}])\log \lambda([\widehat{\tau}])-\sum_{\widetilde{\omega}\in \mathcal{F}(\widetilde{\sigma})}\lambda([\widetilde{\omega}])\log \lambda([\widetilde{\omega}])\bigg|\\
&\leq&C_1\sum_{h=2}^{M}\sum_{\widetilde{\sigma}\in\widetilde{\mathcal{F}}_h(k)}\lambda(\mathcal{F}(\widetilde{\sigma}))\leq C_1.
\end{eqnarray*}
Note that $l(\widetilde{\Lambda}_M(k))=\underline{\xi}(k)$ (cf. (\ref{temp03})). Thus, by (\ref{equivalent}), (\ref{s8}) and (\ref{xikcomparable}), we obtain
\begin{eqnarray}\label{t4}
\big|s_k-t(\widetilde{\Lambda}_M(k))\big|&=&\bigg|\frac{\sum_{\sigma\in\Lambda_k}\mu_\sigma\log\mu_\sigma}{\sum_{\sigma\in\Lambda_k}\mu_\sigma\log m^{-|\sigma|}}-
\frac{\sum_{\widehat{\tau}\in\widetilde{\Lambda}_M(k)}\lambda([\widehat{\tau}])\log\lambda([\widehat{\tau}])}
{\sum_{\sigma\in\widetilde{\Lambda}_M(k)}\lambda([\widehat{\tau}])\log m^{-|\widehat{\tau}|}}\bigg|\nonumber
\\&=&\frac{\Delta_k}
{\bigg|\sum_{\sigma\in\widetilde{\Lambda}_M(k)}\lambda([\widehat{\tau}])\log m^{-|\widehat{\tau}|}\bigg|}\lesssim\frac{1}{\underline{\xi}(k)}\lesssim\frac{1}{k}.
\end{eqnarray}
On the other hand, by Lemmas \ref{lem01} and \ref{antichain}, we deduce
\begin{equation}\label{t5}
|t(\widetilde{\Lambda}_M(k))-s_0|\lesssim k^{-1}.
\end{equation}
Combining (\ref{t4}) and (\ref{t5}), we conclude that
\[
|s_k-s_0|\leq|s_k-t(\widetilde{\Lambda}_M(k))|+|t(\widetilde{\Lambda}_M(k))-s_0|\lesssim k^{-1}.
\]
This completes the proof of the lemma.
\end{proof}

\emph{Proof  of Theorem \ref{mthm}}
Let $R^{s_0}_k(\mu)$ be as defined in Lemma \ref{sz1}. As a consequence of Proposition \ref{pre01}, we have
\[
R^{s_0}_k(\mu)\approx s_0^{-1}\log\phi_k+s_k^{-1}\sum_{\sigma\in\Lambda_k}\mu_\sigma\log\mu_\sigma.
\]
By (\ref{lambdak}) and (\ref{temp01}), we know that for every $\sigma\in\Lambda_k$, we have $\log\mu_\sigma^{-1}\approx\log\phi_k$. Note that the sequence $(s_k)_{k=1}^\infty$ is bounded away from zero. Hence, we obtain
\[
R^{s_0}_k(\mu)\approx \big(s_0^{-1}-s_k^{-1}\big)\log\phi_k.
\]
From this, (\ref{xikcomparable}) and Lemma \ref{lem5}, we deduce
\begin{eqnarray*}
\big|R_k^{s_0}(\mu)\big|\approx\big|\big(s_0^{-1}-s_k^{-1}\big)\log\phi_k\big|\asymp 1.
\end{eqnarray*}
Thus, by Lemma \ref{sz1}, we conclude that $0<\underline{Q}_0^{s_0}(\mu)\leq\overline{Q}_0^{s_0}(\mu)<\infty$.

%\noindent \textbf{Acknowledgement} S. Zhu is supported by National Natural Science Foundation of China No. 11571144.

\end{document}